\newtheorem{theorem}{Theorem}[section]
\newtheorem{prop}[theorem]{Proposition}
\newtheorem{cor}[theorem]{Corollary}
\theoremstyle{definition}
\newtheorem{definition}[theorem]{Definition}
\theoremstyle{remark}
\numberwithin{equation}{section}
\newcommand{\defequal}{\stackrel{\text{\tiny def}}{=}}
\author{S. Aksoy}
\address{}
\author{A. Azzam}
\address{}
\author{C. Coppersmith}
\address{}
\author{J. Glass}
\address{}
\author{G. Karaali}
\address{}
\author{X. Zhao}
\address{}
\author{X. Zhu}
\address{}
\renewcommand{\emph}{\textit}
\thanks{Aksoy, Azzam, Coppersmith, and Karaali were partially supported by NSF Grant DMS-0755540. Karaali was partially supported by a Pomona College Hirsch Research Initiation Grant. Zhao was partially supported by the Hutchcroft Fund of the Department of Mathematics and Statistics at Mount Holyoke College. Zhu was partially supported by a Mount Holyoke College Ellen P. Reese Fellowship.}
\theoremstyle{definition}
\newcommand{\baseRing}[1]{\ensuremath{\mathbb{#1}}}
\newcommand{\N}{\baseRing{N}}
\newcommand{\s}{\mathfrak{s}}
\renewcommand{\phi}{\varphi}
\begin{document}

\title[Coalitions and Cliques]{Coalitions and Cliques in the School Choice Problem}

\date{July 25, 2011}

\begin{abstract}
The school choice mechanism design problem focuses on assignment mechanisms matching students to public schools in a given school district. The well-known Gale Shapley Student Optimal Stable Matching Mechanism (SOSM) is the most efficient stable mechanism proposed so far as a solution to this problem. However its inefficiency is well-documented; recently the Efficiency Adjusted Deferred Acceptance Mechanism (EADAM) was proposed as a remedy for this weakness. This note introduces two related adjustments to SOSM in order to address the same inefficiency issue. In one we create possibly artificial coalitions among students where some students modify their preference profiles in order to improve the outcome for some other students. Our second approach involves trading cliques among students where those involved improve their assignments by waiving some of their priorities. The coalition method yields the EADAM outcome as well as other Pareto dominations of the SOSM outcome, while the clique method yields all possible Pareto optimal Pareto dominations of SOSM. The clique method furthermore incorporates a natural solution to the problem of breaking possible ties within preference and priority profiles. We discuss the practical implications and limitations of our approach in the final section of the article.
\end{abstract}

\maketitle

\bibliographystyle{plain}

\section{Introduction}
\label{S:Introduction}

Since the mid-eighties, in cities across the United States, public school assignment policies have shifted towards providing students the opportunity to influence their school assignment. The main objective of these {\it school choice} policies is to allow all students to attend more desirable schools. A standard theoretical framework for studying such policies is two-sided matching (cf.\ \cite{Ga01, RoSo90}).
Presented in this context, the goal of the School Choice Problem (SCP) is to devise a matching mechanism (designed by or for the school district) that allocates available resources (seats in schools) among players (students or parents) subject to district priorities and legal requirements. 

The ideal way to solve the SCP would be to make all schools desirable to sufficiently many students so that all students could attend schools high on their preference profile. Short of a magic wand, we follow in the footsteps of other researchers in our attempt to create the most desirable matching possible given the seemingly intractable problem of too few seats in desirable schools. Current school choice mechanisms tolerate a large number of students receiving low preference schools (``inefficiency'') in order to respect school priority structures (``stability''). The ultimate purpose of these priorities is to benefit the students, but in many practical situations they are also the direct cause of the efficiency losses. This suggests that taking a stable solution as baseline (starting out with a balanced focus on school priorities and student preferences) and then making improvements for efficiency (emphasizing preferences over priorities at the expense of stability) may be a good compromise  incorporating both preferences and priorities,  resulting in more desirable matchings.\footnote{A relevant quote from \cite{AbPaRo09}: ``Pareto efficiency for the students is the primary welfare goal, but [...] stability of the matching, and strategy-proofness in the elicitation of student preferences, are incentive constraints that likely have to be met for the system to produce substantial welfare gains over the [current] system.''}

To this end we employ the language and methods of mechanism design as applied to the SCP \textit{a la} \cite{AbSo03}. In our context  the designer/principal is the school district (or whoever is choosing the mechanism to be used). Students are the players; schools are merely items to be consumed, though the end result
is influenced by the priorities defined by the district. The designer's desired outcome is that all students get matched to a school high on their preference lists. Thus the underlying sentiment in our research coincides with that found in previous research in the SCP: the idea that the process should result in as many students as possible receiving placements at schools as high on their preference list as possible in a given matching. School districts may of course have additional motives when designing their policies, which might include, among other things, diversity and social justice considerations; these can to an extent be incorporated in the school priority structures.

In this article we examine several commonly applied mechanisms in two sided matching and school choice.  We introduce two related approaches and examine how these new approaches measure up.  In Section \S\S\ref{SS:Background} we introduce three standard mechanisms used in this area of investigation: SOSM\footnote{Since SOSM basically uses a particular type of deferred acceptance (DA) procedure applied to the SCP, in \S\S\ref{SS:Background}, we also describe the DA algorithm briefly.}, EADAM, and TTC.  In \S\ref{S:Coalitions} we introduce our first new approach by studying the impact on outcomes if students were to form ``coalitions'' in order to affect their school assignments.  This section closely follows  \cite{Hu06} where it is shown that while the Gale-Shapley deferred acceptance algorithm (DA) disincentivizes strategic action by individuals, it is still feasible for groups to beat the system by coming together and strategizing. We adapt Huang's methods to the SCP and along the way prove that SOSM (DA as applied to School Choice) is not coalition-strategyproof. We then show that coalitions created by the school district (or any designer/principal) could result in efficiency gains over the DA/SOSM outcome. We find that this approach produces many possible Pareto improvements to the DA/SOSM outcome and, in particular, will yield the Efficiency Adjusted Deferred Acceptance Mechanism (EADAM) outcome as one of its outcomes.\footnote{All acronyms in this paragraph will be explained in detail in \S\S\ref{SS:Background}.}

Following up on the coalition/cooperation theme, in \S\ref{S:Cliques} we focus on groups of students who form trading cycles (``cliques'') to improve their own assignments.\footnote{The term \emph{clique} has a specific meaning in graph theory, unrelated to our work here.}  We examine the impact of these trading cliques when starting with the baseline assignment that results from the DA/SOSM mechanism.  In particular we show that the coalition improvements of \S\ref{S:Coalitions} can be integrated into this new framework, which proves to be a powerful construct to study cycle improvements of various kinds. We also note that indifferences in student preferences may be incorporated into this model. Although a considerable amount of research has been done regarding indifferences within school priority classes, indifference in student preferences has not been studied in as much depth. As far as we know, this characteristic of cycle improvement models has not been investigated before. 

Interwoven throughout this work is our emphasis on student preferences as opposed to school priorities, that is, efficiency as opposed to stability. This is due to our wariness of accepting the cost of upholding priorities at expense to the students they are purported to help. We also note that to run the standard algorithms, strict priority rankings are needed.
While priorities vary between districts,  a single priority class will often have a large number of students. Thus to apply standard two-sided matching algorithms to the SCP, one must ultimately break ties randomly among  students within a single priority class. This creates arbitrary rankings, introduces artificial conditions, and results in a sizable efficiency loss (see \cite{ErEr08} for a study of tie-breaking and its efficiency cost). Both collaborative approaches presented (coalitions and cliques) make efficiency adjustments to a stable baseline solution which we see as a feasible way to partially address this tie-breaking conundrum as well as the more standard stability-efficiency tradeoff mentioned earlier.


\subsection{Notation and basic terms used}
\label{SS:Notation}

Let $I$ denote a nonempty set of students, and $S$ a nonempty set of schools. A {\bf matching} $M: I\to I\times S$ is a function that associates every student with exactly one school, or potentially no school at all. Write $\mathfrak{M}$ for the set of matchings. We will also use the related function $M : I \rightarrow S$ and write $M[i] = s$ if $M(i) = (i,s)$.

A {\bf preference profile} ${P}_i$ for student $i \in I$ is a tuple $(S_{1},\dots,S_{n})$ where the $S_{j}$'s form a partition of $S$ and every element of $S_{j}$ is preferred to every element of $S_{k}$ if and only if $j<k$.
Define the {\bf ranking function} $\phi_i : S \rightarrow \N$  of a student $i \in I$ by letting
$\phi_i(s)$ denote $i$'s {ranking} of $s \in S$. In other words $\phi_i(s) = j$ if $s \in S_j$.
When each $S_{j}$ is a singleton, we say that $i$'s preference profile is {\bf strict}, (in which case we can view $P_i$ as an $n$-vector).  If $s_{k},s_{l}\in S_{j}$ for some $j,k\neq l$, then we say that the student is {\bf indifferent} between $s_{k}$ and $s_{l}$. If $i$ prefers $s_{k}$ to $s_{l}$, we write $s_k \succ_i s_l$, or simply $s_k \succ s_l$ if $i$ is unambiguous. Note that the notation $\succ$ denotes a strict order; if we want to describe a weak order, we will write $\succeq$. We denote a set consisting of preference profiles for each student in $I$ by ${\bf P} = \{P_i : i \in I\}$ and the space of all such sets is denoted by $\mathfrak{P}$.

A {\bf priority structure} ${\Pi}_s$ for school $s \in S$ is a tuple $(I_{1},\dots,I_{n})$ where the $I_{j}$'s form a partition of $I$ and every element of $I_{j}$ is preferred to every element of $I_{k}$ if and only if $j < k$. When each $I_{j}$ is a singleton, we say that $s$'s priority structure is {\bf strict}, (in which case we can view $\Pi_s$ as an $n$-vector).  If $i_{k},i_{l}\in I_{j}$ for some $j,k\neq l$, then we say that the school is {\bf indifferent} between $i_{k}$ and $i_{l}$. If $s$ prefers $i_{k}$ to $i_{l}$ we write
$i_k \succ_s i_l$, or simply $i_k \succ i_l$ if $s$ is unambiguous. Once again, the notation $\succ$ denotes a strict order; if we want to describe a weak order, we will write $\succeq$. We denote a set consisting of priority structures for each school in $S$ by ${\bf \Pi} = \{\Pi_s : s \in S\}$ and the space of all such complete sets is denoted by $\mathfrak{\Pi}$.

A matching $M'$ {\bf (Pareto) dominates} $M$ if $M'[i] \succeq_i M[i]$ for all $i$ and $M'[j] \succ_j M[j]$ is strict for some $j$. A {\bf (Pareto) efficient} {\bf matching} is a matching that is not (Pareto) dominated.

A {\bf matching mechanism} $\mathcal{M}: \mathfrak{P} \times \mathfrak{\Pi} \to \mathfrak{M}$
is a function that takes an ordered pair $(\textbf{P}, {\bf \Pi}) \in \mathfrak{P} \times \mathfrak{\Pi}$ of preferences and priorities and produces a matching.

Let $\Pi_{s}$ be a priority structure for school $s$. We say that a matching $M$ {\bf violates the priority} of $i\in I$ for $s$ if there exist some $j\in I$ and $s^{\prime} \in S$ such that
\begin{enumerate}
\item $M[j]=s$, $M[i] = s^{\prime}$: $j$ gets assigned $s$ under $M$ and $i$ gets assigned $s^{\prime}$ under $M$.
\item $s \succ_i s^{\prime}$: $i$ prefers attending $s$ over $s^{\prime}$  and
\item  $i \succ_s j$: $s$ prioritizes $i$ over $j$.
\end{enumerate}
We say that a matching $M$ is {\bf stable} if
\begin{enumerate}
\item $M$ does not violate any priorities.
\item No student is matched to a lower-ranked school  when a more preferred school is unfilled.
\end{enumerate}
A {\bf stable mechanism} is one that always produces stable matchings.
A matching mechanism is \textbf{strategyproof} if there is no rational  incentive for a student to misrepresent their preferences.


\subsection{Background}
\label{SS:Background}

In this section we introduce several well-known matching algorithms / mechanisms and give a few illustrative examples.\footnote{We should remark that all the mechanisms described in this section use strict preference lists for students.}  
In the literature on two-sided matching the Gale-Shapley deferred acceptance algorithm  \cite{GaSh62} is highly touted, see \cite{RoSo90} for an extensive review of the various applications of this algorithm and \cite{Ro08} for a more recent historical overview. Gale and Shapley first described their deferred acceptance method in the context of the \emph{stable-marriage problem}: There are two distinct groups (men and women) each with an individual preference profile ranking the members of the opposite sex; the ultimate goal is to find a stable matching between the men and the women.\footnote{In this context stability means that no man will prefer a woman other than his own partner who also prefers him more than the man to whom she was matched.} The deferred acceptance procedure (DA) is as follows:  

In round $1$ each man proposes to his top choice. Each woman then tentatively accepts the man who is highest on her preference list among those who proposed to her that round (who is now her fiance) and rejects the rest. In step $k$, each unengaged man proposes to his next choice, and each woman considers her new suitors along with her current fiance, tentatively accepting her top choice among them, and rejecting the rest.  The algorithm ends when all men are engaged.

In \cite{GaSh62}, Gale and Shapley proposed applying their deferred acceptance algorithm to the college admissions problem, with the men replaced by  students applying to colleges and the women replaced by the colleges to which they applied. In 2003 Abdulkadiro\v{g}lu and S\"{o}mnez adapted the Gale-Shapley algorithm to the SCP \cite{AbSo03} calling it the Student Optimal Stable Mechanism (SOSM). The Gale-Shapley deferred acceptance algorithm, as adapted to the SCP in the form of SOSM, is widely held to be a practical mechanism for implementation. In particular, several large districts such as New York City and Boston \cite{APR05, AbPaRo09, APRS05, AbPaRoSo06} have adopted SOSM as their mechanism of choice. Pareto efficiency, stability, and strategyproofness are the main criteria used to evaluate a school choice matching mechanism,\footnote{We will add a fourth criterion to our consideration, see \S\S\ref{SS:NewCriterion}}  and within these measures, SOSM performs well. Indeed, SOSM offers a stable strategyproof mechanism whose outcomes Pareto dominate all other stable matchings.\footnote{Note that a stable mechanism can never really be strategy-proof in the complete sense. More specifically, \textit{no stable matching mechanism exists for which stating the true preferences is always a best response for every agent where all other agents state their true preferences} (see for instance \cite[Cor.\ 4.5]{RoSo90}). However the DA/SOSM is practically strategy-proof as we only view the students as strategic players and the student optimality implies that there is no incentive for the students to misrepresent their preferences (cf.\ \cite{Ro82}). This perspective does not take into account manipulation by schools in capacity (cf.\ \cite{So97}) or preferences, see \cite{Eh10} for recent work addressing these issues.}

In this article we use DA/SOSM as a baseline to improve upon.  Indeed certain improvements are possible, feasible, and desirable because SOSM suffers from documented efficiency and welfare losses \cite{AbPaRo09, Ke10}. More precisely, although the SOSM outcome in a given setting dominates any other stable matching, it can often be dominated by an unstable matching. We now illustrate this potential trade-off between stability and efficiency with an example due to Roth, which we will label as $SCP_1$. Assume there are three schools, $s_{1}, s_{2}, s_{3}$ and three students $i_{1}, i_{2}, i_{3}$. The priorities of the schools and the preferences of the students are given by:
\[ SCP_1 : \qquad \begin{array}{ccc}
i_{1}:s_{2} \succ s_{1} \succ s_{3} & \qquad & s_{1}: i_{1} \succ i_{3} \succ i_{2} \\
i_{2}: s_{1}  \succ  s_{2} \succ s_{3} & \qquad & s_{2}:i_{2} \succ i_{1} \succ i_{3} \\
i_{3}: s_{1} \succ s_{2} \succ s_{3} & \qquad &  s_{3}:i_{2} \succ i_{1} \succ i_{3}
\end{array}\]
where $a \succ b$  stands for ``{\it $a$ is preferable to $b$}" (more about notation and terminology can be found in \S\S\ref{SS:Notation}).
Here, the only stable matching is:
\[
M_S^{SCP_1}  = \begin{pmatrix}i_{1}& i_{2}& i_{3}\\ s_{1}&s_{2}&s_{3}\end{pmatrix},\]
but this matching is (Pareto) dominated by:
\[
M_E^{SCP_1}  = \begin{pmatrix}i_{1}& i_{2}& i_{3}\\ s_{2}&s_{1}&s_{3}\end{pmatrix}.\]
We see that $M_E^{SCP_1}$ (Pareto) dominates $M_S^{SCP_1}$ because it assigns $i_1$ and $i_2$ schools they prefer over their $M_S^{SCP_1}$ assignment. Furthermore $M_E^{SCP_1}$ is (Pareto) efficient. However the matching is no longer stable because $i_2$ is in the position of violating $i_3$'s priority for $s_1$.

In order, in part, to address the weakness illustrated by the example above, Kesten in \cite{Ke10} proposes a new mechanism, and calls it the Efficiency Adjusted Deferred Acceptance Mechanism (EADAM). In order to understand EADAM we must first define an \textbf{interrupter}. Let student $i$ be one who is tentatively placed in a school $s$ at some step $t$ while running the SOSM, and rejected from it at some later step $t^{\prime}$. If there exists at least one other student who is rejected from school $s$ after step $t-1$ and before step $t^{\prime}$, then we call student $i$ an \textbf{interrupter} for school $s$ and the pair $(i, s)$ is an \textbf{interrupting pair} of step $t^{\prime}$. An interrupter is \textbf{consenting} if she allows the mechanism to violate her priorities at no expense to her. The EADAM then runs as follows:
\begin{itemize}
\item \underline{Round 0}: Run the SOSM.
\item \underline{Round 1}: Find the last step (of the SOSM run in Round 0) at which a consenting interrupter is rejected from the school for which he/she is an interrupter. Identify all interrupting pairs in that step which contain a consenting interrupter. If there are no such pairs, then stop. Otherwise for each identified interrupting pair $(i,s)$, remove school $s$ from the preference list of student $i$ without changing the relative order of the remaining schools. Rerun the SOSM with the new preference profile for $i$ until all students have been assigned.
\item[] And in general,
\item \underline{Round k}, k $\ge$ 1: Find the last step (of the SOSM run in the previous round) at which a consenting interrupter is rejected from the school for which he/she is an interrupter. Identify all interrupting pairs in that step which contain a consenting interrupter. If there is no such pair, stop. Otherwise for each identified interrupting pair $(i, s)$, remove school $s$ from the preference list of student $i$ without changing the relative order of the remaining schools. Rerun the SOSM with the new preference profile until all students have been assigned.
\end{itemize}

In $SCP_1$, $(i_3, s_1)$ is an interrupting pair and the EADAM with the consent of $i_3$ outputs the Pareto efficient matching $M_E^{SCP_1}$.

Even though the end result of consenting for interrupters is that they allow the mechanism to violate their priorities for schools they would not be assigned anyway, the step-by-step description above points toward a different route of obtaining the same outcome. The practical outcome would be the same if the consenting interrupters were to modify their preference lists in such a way as to drop schools that they'd not have been assigned to anyway. Thus instead of asking students to sign consent forms to waive priorities, as would be required to run the EADAM, we could in theory ask them to reconsider their preference lists.\footnote{In reality this is not desirable; we emphatically want students to be truthful in declaring their preferences.}

Last, we describe the Top Trading Cycles (TTC) mechanism, first introduced in \cite{ShSc74} (also see \cite{AbSo99, KrWa07}) and adapted to the school choice context in \cite{AbSo03} as an alternative to SOSM. TTC is a strategyproof mechanism that compromises on stability to achieve efficiency, 
and proceeds as follows:
\begin{itemize}
\item \underline{Round 1}: Each student points to his or her first choice school. Similarly each school points to its first choice applicant. Since there are finitely many students and schools, there is at least one cycle. For each such cycle do the following: Assign each student in the cycle to the school he or she is pointing to and remove the student and the school from the market. All unassigned students and unfilled schools move on to the next round.
\item[] And in general,
\item \underline{Round k}, k $\ge$ 1: Each unassigned student points to his or her top choice school among the unfilled ones. Each unfilled school points to the student whom it ranks highest among the unassigned students. There should be at least one cycle. For each such cycle do the following: Assign each student in the cycle to the school he or she is pointing to and remove the student and the school from the market. All unassigned students and unfilled schools move on to the next round. The algorithm runs until all students have been assigned.
\end{itemize}
Thus in essence, once in a trading cycle, students are allowed to trade schools among themselves.


\subsection{A new evaluation criterion}
\label{SS:NewCriterion}
The three main criteria most commonly used to evaluate school choice mechanisms are stability, strategyproofness and (Pareto) efficiency. In \cite{AACGKZZha10} we introduced a new student-optimal criterion, a ``\textit{preference reverence index}", and showed that it incorporates a measure of student optimality that is not fully captured by these three previously emphasized criteria.  When evaluating matching outcomes in the later parts of this article we make use of this index, so we provide a brief exposition about it in this section.

With the notation from \S\S\ref{SS:Notation} we define $\mu: \mathfrak{M} \to \N$ by
\[
\mu(M)=\sum_{i \in I}|\phi_i(M[i])-1|.\]
For any given $M \in \mathfrak{M}$ we will call $\mu(M)$ the {\bf preference reverence index} of $M$ or simply the {\bf preference index}.
We summarize some results regarding the preference index in the following:
\begin{prop}The following are some properties and implications of the preference reverence index as applied to the SCP:
\begin{enumerate}
\item There can exist two stable matchings with the same preference index.
\item The stable matching with the smallest preference index is the SOSM outcome and it is the unique stable matching with that preference index.
\item If a matching $M$ (Pareto) dominates $M^{\prime}$, then $M$ has a lower preference index.
\end{enumerate}
\end{prop}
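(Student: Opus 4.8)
The plan is to treat parts (2) and (3) as immediate consequences of a single monotonicity observation and to settle part (1) by an explicit symmetric construction. The key preliminary remark is that, since the indifference classes $S_1,\dots,S_n$ are indexed starting at $1$, we always have $\phi_i(s)\ge 1$, so the absolute value in the definition is vacuous and $\mu(M)=\sum_{i\in I}\phi_i(M[i])-\abs{I}$. Comparing preference indices therefore reduces to comparing the raw sums $\sum_{i}\phi_i(M[i])$, and everything hinges on the equivalence $s\succeq_i s' \iff \phi_i(s)\le\phi_i(s')$, with strict preference corresponding to strict inequality; this is immediate from the definition of the ranking function.

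For part (3), I would start directly from the definition of Pareto domination. If $M$ dominates $M'$, then $M[i]\succeq_i M'[i]$ for every $i$, giving the termwise inequalities $\phi_i(M[i])\le\phi_i(M'[i])$, while $M[j]\succ_j M'[j]$ for some $j$ gives the strict inequality $\phi_j(M[j])<\phi_j(M'[j])$. Summing over all students and subtracting the constant $\abs{I}$ yields $\mu(M)<\mu(M')$, which is exactly the claim.

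Part (2) then follows almost for free from part (3) together with the optimality property of SOSM quoted earlier, namely that the SOSM outcome $M_S$ Pareto dominates every other stable matching. Indeed, if $M\ne M_S$ is any stable matching, then strictness of the student preferences turns the student-optimality of $M_S$ into at least one strict improvement, so $M_S$ genuinely dominates $M$ in the sense of the definition; part (3) then gives $\mu(M_S)<\mu(M)$. This single strict inequality simultaneously shows that $M_S$ attains the minimum preference index among stable matchings and that it is the \emph{unique} minimizer, since every other stable matching has strictly larger index.

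The real content is part (1), which I would establish by exhibiting an instance, and here some care is needed: part (2) forbids two stable matchings from tying at the minimum, so the tie must occur strictly above the SOSM value and the instance must carry at least three stable matchings. I would build one from two independent $2\times2$ blocks, with students $i_1,i_2$ competing for $s_1,s_2$ and students $i_3,i_4$ competing for $s_3,s_4$, each student ranking its own block's two schools first and each block arranged so that in isolation it is the standard $2\times2$ instance admitting exactly two stable matchings, of preference indices $0$ and $2$. Placing the cross-block entries at the bottom of every list prevents any cross-block blocking pair, so the stable matchings of the whole instance are exactly the four block-products, with indices $0,2,2,4$; the two matchings obtained by swapping in one block while leaving the other at its optimum are distinct, both stable, and both of preference index $2$. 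The only mild obstacle is verifying the absence of blocking pairs in the combined instance, but the bottom-ranked cross-block entries make this routine.
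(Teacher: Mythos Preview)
The paper does not actually prove this proposition; it simply states it and refers the reader to \cite{AACGKZZha10} for details. So there is no proof in the paper to compare against, and your attempt must be evaluated on its own terms.

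Your arguments for parts (2) and (3) are correct. The observation that $\phi_i(s)\ge 1$ makes the absolute value redundant is useful, and the termwise monotonicity argument for (3) is exactly right. For (2), you correctly invoke the student-optimality of SOSM (which the paper states explicitly in \S\S\ref{SS:Background}) and correctly note that under strict student preferences --- which is the setting in which SOSM is defined --- any stable $M\ne M_S$ is genuinely Pareto dominated by $M_S$, so (3) applies and gives both minimality and uniqueness in one stroke.

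Your construction for part (1) is also correct. The two disjoint $2\times 2$ blocks with opposed preferences/priorities (students preferring one school, schools prioritizing the other student) each admit exactly two stable matchings of indices $0$ and $2$; putting cross-block schools at the bottom of every student's list guarantees that in any block-product matching each student already has a within-block school she prefers to any cross-block school, so no cross-block blocking pair can arise. The two ``mixed'' products then give two distinct stable matchings of index $2$, and your preliminary remark that the tie must occur strictly above the SOSM value (forcing at least three stable matchings) is a nice consistency check on the construction. The only thing worth making fully explicit is the school-side data for the cross-block students, but since no student ever wants a cross-block school in the matchings you exhibit, any extension of the priorities works.
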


See \cite{AACGKZZha10} for more on the preference reverence index.


\section{Coalitions in the school choice problem}
\label{S:Coalitions}

In \cite{Hu06} Huang discusses a weakness found in the Gale-Shapley stable matching in the context of the stable marriage problem and introduces the idea of {\it coalition cheating in the marriage problem}. More specifically he shows that a coalition can be formed where some men, without forgoing their own Gale-Shapley stable matching assignment, can cheat (misrepresent their preferences) so that some other men marry women who are higher on their preference list.

In this section we apply these ideas to the school choice problem. In \S\S\ref{SS:HuangsConstruction} we give the details of Huang's construction.  Then in \S\S\ref{SS:CoalitionImplementation} we introduce the elements of cheating coalitions in the context of the SCP, and  discuss some implementation issues. In \S\S\ref{SS:Hopeless} we focus on some interesting theoretical consequences of coalitions in the context of the SCP.  In \S\S\ref{SS:CoalitionsEADAM} and \S\S\ref{SS:CoalitionsTTC},
we compare the possible outcomes of coalitions to that of EADAM, and to that of TTC, respectively.


\subsection{Huang's Construction and Coalitions}
\label{SS:HuangsConstruction}

Originally proved in \cite{DuFr81}, the following theorem establishes that in the stable marriage problem, there exists no coalition of men that may falsify their preferences such that every member of the coalition receives a \emph{strictly better} assignment:
\begin{theorem}[Dubins-Freedman 1981]\label{T:DubinsFreedman}
In the Gale-Shapley men-optimal algorithm, no subset of men can improve their assignment by falsifying their preference lists.
\end{theorem}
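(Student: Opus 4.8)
The plan is to prove the contrapositive-style statement by analyzing the deferred acceptance (DA) algorithm under a falsified preference profile and comparing it to the truthful run. Suppose, toward a contradiction, that a coalition $C \subseteq I$ of men can each strictly improve their assignment by jointly submitting falsified preference lists. Let $M$ denote the men-optimal stable matching under the true preferences and let $M'$ denote the men-optimal stable matching the algorithm produces when the men in $C$ submit falsified lists (while all other men report truthfully). By hypothesis, every man $m \in C$ strictly prefers $M'(m)$ to $M(m)$ under his \emph{true} preferences.

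First I would establish the key structural fact about $M'$: namely, that the set of women assigned to the coalition, $\{M'(m) : m \in C\}$, together with the coalition itself, forms a closed ``trading'' configuration. The main idea is to consider the woman $w$ who is worst-off in $M'$ among all women matched to coalition members — more precisely, among the women $w = M'(m)$ for $m \in C$, since each such $m$ strictly gained, each such $w$ must have been ``pulled away'' from her $M$-partner. I would then trace where the displaced men go. Because the non-coalition men report truthfully and $M$ is stable for the true profile, one can show that the women in $\{M'(m): m\in C\}$ can only be reassigned among themselves and to coalition members, so no woman strictly outside this closed set is disturbed.

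The heart of the argument — and the step I expect to be the main obstacle — is to derive a contradiction from the stability of $M$ together with the strict-improvement hypothesis. The tool here is the classical result that in the men-optimal stable matching every man receives his \emph{best} partner attainable in \emph{any} stable matching (the student-optimality property underlying SOSM). Since $M$ is men-optimal under the true preferences, no man can do better than $M(m)$ in any stable matching. Thus if the falsified run produced a matching $M'$ in which every coalition member strictly improved, I would need to show that $M'$, or a matching derived from it by reverting the coalition to truthful reports, is itself stable under the true profile — which would contradict the optimality of $M$ for the coalition members. The delicate point is that $M'$ need not be stable under the true preferences (the falsification can create blocking pairs); the argument must carefully exhibit a genuinely stable matching under the true profile that dominates $M$ for the coalition, exploiting that the women in the closed set only trade among coalition men and that each such trade respects the women's true preferences on that restricted set.

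Concretely, I would argue that restricting attention to the closed set of women and the coalition men, the assignment $M'$ induces a matching that is stable with respect to the \emph{true} preferences on that sub-market: any potential blocking pair would have to involve a coalition man preferring some woman to his $M'$-partner, but tracing the rejection chains in the falsified DA run shows he already ``proposed past'' or was rejected by her truthfully, so no true blocking pair can exist. This yields a matching, stable under the true profile, in which every man in $C$ does strictly better than under $M$, contradicting the men-optimality of $M$ guaranteed by Gale–Shapley. Hence no such coalition can exist.
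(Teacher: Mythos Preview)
The paper does not actually prove Theorem~\ref{T:DubinsFreedman}; it is stated as a known result with attribution to \cite{DuFr81}, and the reader is referred there for the argument. So there is no paper proof to compare your proposal against, and I evaluate your plan on its own merits.

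Your overall strategy---extract from $M'$ a matching stable under the \emph{true} profile in which coalition members do strictly better than in $M$, contradicting men-optimality of $M$---is the right shape, and you correctly identify the delicate point: $M'$ itself need not be stable under the true preferences. But your proposed resolution of that point has a real gap. You claim that on the sub-market of coalition men and their $M'$-women no true blocking pair exists, because any coalition man who truly preferred some woman $w$ to his $M'$-partner would already have ``proposed past'' her or been rejected by her in the falsified DA run. This fails precisely because coalition men propose according to their \emph{falsified} lists: a man $m\in C$ may truly rank $w$ above $M'(m)$ yet never propose to $w$ because he demoted $w$ in his reported list. The rejection-chain trace in the falsified run therefore tells you nothing about his true preferences, and the sub-market stability claim is unsupported.

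A secondary issue: the ``closed trading configuration'' should be taken over the set $A\supseteq C$ of \emph{all} men who strictly improve under $M'$ (non-coalition men may improve incidentally), not just $C$; only for $A$ can one show $M'(A)=M(A)$ as sets of women. The standard way to close the argument is the Blocking Lemma (Hwang; Gale--Sotomayor; see \cite{RoSo90}): if $A\neq\emptyset$ then $M'$ is blocked under true preferences by some pair $(m,w)$ with $m\notin A$. Since then $m\notin C$ and women always report truthfully, $(m,w)$ also blocks $M'$ under the \emph{stated} preferences, contradicting the stability of $M'$ as the DA output of the falsified problem. Your plan is heading in this direction but is missing this lemma, whose proof in the case $M'(A)=M(A)$ requires a nontrivial extremal argument over the \emph{truthful} DA run rather than the falsified one.
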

Therefore in order to study coalitions which falsify preferences to improve their assignments, Huang introduces a nuanced notion of coalitions, which incorporates a separation between two main components: those who falsify their preferences, and those that benefit from these falsifications. In the following we provide a detailed exposition of his construction.

Let $\mathcal{M}en$ and $\mathcal{W}om$ be the set of men and women respectively in a given stable marriage problem. Let $M$ be the Gale-Shapley stable matching assignment for this problem when all members of $\mathcal{M}en$ and $\mathcal{W}om$ submit their true preferences.
A {\bf coalition} $C$ is defined in terms of a pair $(K,A)$ of subsets of the set $\mathcal{M}en$. The first subset, the {\bf cabal} $K = (m_{1},m_{2}, ...,m_{|K|})$ of the coalition $C$, is a list of men such that each man $m_{i}$, $1 \leq i \leq |K|$, prefers $M[m_{i-1}]$ to his own partner $M[m_{i}]$, indices taken module $|K|$. In other words, we have $M[m_{i-1}] \succ_{m_i} M[m_i]$ for $1 \leq i \leq |K|$, what we will call a {\bf cabal loop}, written $(m_1 \rightarrow m_{|K|} \rightarrow m_{|K|-1} \rightarrow \cdots \rightarrow m_1)$, a closed chain of men each of whom would prefer the stable partner of the person before him to his own partner.
The second subset, the {\bf accomplice set} $A=A(K)$ of cabal $K$, is a set of men $A(K) \subset \mathcal{M}en$ such that $m \in A(K)$ if for some $m_{i} \in K$, $M[m_{i}] \succ_{m} M[m]$ and $m \succ_{M[m_{i}]} m_{i+1}$. In other words, an {\bf accomplice} is a man who in his truthful preference list ranks the stable partner of someone in the cabal higher than his own stable partner, while he himself is ranked higher by that woman than the next member of the cabal who would prefer her to his own partner. Note that $K$ and $A(K)$ may or may not be disjoint.

For any given man $m \in \mathcal{M}en$ we can write the preference profile of $m$ as a disjoint union of three sets: $(P_L[m],M[m],P_R[m])$. Here the set $P_L[m]$ (respectively $P_R[m]$) is simply the list of women on $m$'s preference profile to the left (respectively to the right) of his stable partner $M[m]$.

Let $C = (K, A(K))$ be a coalition as described above and let $\pi_r$ denote a random permutation of $\mathcal{W}om$. The coalition cheating procedure proceeds as follows (\cite[Thm.2]{Hu06}):
Each accomplice $m \in A(K)$ submits a falsified list of the form
\[ (\pi_r(P_L[m]-X), M[m], \pi_r(P_R[m]\cup X)).\]
Here
$X$ is the set
\[ \{w \in M[K] \vert w = M[m_i], w \succ_m M[m], m \succ_w m_{i+1} \}\]
if $m \not \in K$, {and}
\[ X = \{w  \in M[K] \vert w = M[m_i], i \neq j, w \succ_{m_j} M[m_{j}], m_j \succ_w m_{i+1}\},\]
if $m = m_j \in A(K) \cap K$. In other words, accomplices modify their preference profiles by moving women on the left of their stable partner to the right of their stable partner if they are desirable to other men in the cabal. In particular if $m$ is an accomplice, then the set $X$ of women $m$ moves to the right of his stable partner will consist of all the stable partners of members of the cabal who rank $m$ higher than the man following their stable partners in the cabal loop.
Huang then shows that in the resulting matching $M^{\prime}$, $M^{\prime}[m_i] = M[m_{i-1}]$ for $m_i \in K$ and $M^{\prime}[m] = M[m]$ for $m \not \in K$.
Note that in the above the falsified preference lists incorporate a random permutation $\pi_r$ of the preferences to the left and the right of the stable partner. The cheating coalition procedure is quite robust, in that such a random permutation will not affect the outcome. In other words, the resulting matching creates a cyclical reassignment of those within the cabal loop while leaving all other assignments as they were.


\subsection{Coalitions and school choice}
\label{SS:CoalitionImplementation}

Here is an analogue of 
Theorem \ref{T:DubinsFreedman} in the SCP context:
\begin{theorem}\label{T:DubinsFreedmanSCP}
In the SOSM algorithm, no subset of students can improve their assignment by falsifying their preference lists.
\end{theorem}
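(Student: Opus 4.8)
The plan is to reduce Theorem \ref{T:DubinsFreedmanSCP} to the already-established Dubins--Freedman result (Theorem \ref{T:DubinsFreedman}) by exhibiting a structure-preserving correspondence between the School Choice Problem and an associated stable marriage problem. First I would observe that SOSM is literally the Gale--Shapley deferred acceptance algorithm run with students in the role of the proposing side (the ``men'') and schools in the role of the receiving side (the ``women''), where each school's priority structure $\Pi_s$ plays exactly the role of a woman's preference profile. The one genuine structural difference is that schools may have capacity greater than one, whereas women in the marriage problem accept a single partner. The standard device for bridging this gap is to \emph{split} each school $s$ of capacity $q_s$ into $q_s$ identical copies $s^{(1)},\dots,s^{(q_s)}$, each of capacity one, each inheriting the priority order $\Pi_s$, and to expand every student's preference profile by replacing $s$ with the block $s^{(1)}\succ\cdots\succ s^{(q_s)}$ in the same position. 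This converts the SCP instance into a one-to-one marriage instance whose SOSM outcome corresponds, copy-by-copy, to the Gale--Shapley men-optimal matching.

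Next I would verify that this correspondence respects the two notions that appear in the statement: \emph{falsification of preferences} and \emph{improvement of assignment}. A coalition of students falsifying their preference lists in the SCP induces a corresponding falsification in the split marriage problem (permuting and reordering the relevant blocks), so any successful SCP manipulation would yield a successful manipulation of the associated marriage problem. For the ``improvement'' direction I must check that a student is assigned a strictly preferred school in the SCP if and only if the corresponding man is assigned a strictly preferred (copy of a) woman; this follows because the ranking $\phi_i$ treats all copies of a single school as equally desirable and strictly more desirable than any lower-ranked school, so strict improvement is preserved under the splitting. Consequently, if some subset of students could improve their SCP assignments by falsifying preferences, the corresponding subset of men could improve their assignments in the split marriage problem by falsifying preferences, contradicting Theorem \ref{T:DubinsFreedman}.

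The main obstacle, and the step that deserves the most care, is the faithful translation of \emph{weak} improvement together with \emph{strict} improvement for at least one coalition member across the splitting, particularly when the original profiles contain indifferences: although the mechanisms in this section use strict student preference lists (as the footnote in \S\S\ref{SS:Background} notes), I must still argue that the priority ties among school copies do not create spurious manipulation opportunities or obscure genuine ones. The cleanest way to handle this is to fix, once and for all, a consistent strict refinement of each school's priority structure when splitting (so that $s^{(1)},\dots,s^{(q_s)}$ rank students identically and a student pointed at $s$ is assigned the lowest-indexed available copy), and then to confirm that the men-optimal stable matching of the refined marriage problem descends to the SOSM outcome of the SCP under the natural collapse map. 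Once this dictionary is in place, the theorem is immediate from Dubins--Freedman; essentially all the work is in setting up and validating the dictionary rather than in any fresh combinatorial argument. An alternative, should the splitting bookkeeping prove cumbersome, would be to mimic the Dubins--Freedman proof directly in the SCP setting, but I expect the reduction to be both shorter and more transparent.
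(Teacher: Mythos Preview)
Your proposal is correct, and in fact considerably more rigorous than what the paper does. The paper offers no formal proof of Theorem~\ref{T:DubinsFreedmanSCP}: it merely illustrates the statement on the three-student example $SCP_1$ by brute-force inspection of coalitions, gives an informal sentence of intuition, and refers the reader to \cite{DuFr81} for a real argument. Your reduction to Theorem~\ref{T:DubinsFreedman} via the standard school-splitting device (replace each school $s$ of capacity $q_s$ by copies $s^{(1)},\dots,s^{(q_s)}$ with identical priorities, expand each student's list by a fixed-order block) is the canonical way to transfer Dubins--Freedman from one-to-one to many-to-one matching, and your verification that the correspondence respects both falsification and improvement is the right thing to check.

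One small remark: you phrase the ``improvement'' condition as weak-for-all with strict-for-some, i.e.\ Pareto improvement, whereas the paper (and the original Dubins--Freedman theorem) uses the reading that \emph{every} coalition member strictly improves. This only makes your target easier, so it does not affect correctness; but it means your worry about faithfully transporting weak-plus-one-strict improvement across the splitting is unnecessary. With strict student preferences (as assumed throughout \S\S\ref{SS:Background}) and a fixed ordering of the copies, the dictionary is completely mechanical and the concern about spurious manipulation opportunities from priority ties among copies does not arise.
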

This is particularly easy to see in our previous example.
Assume there are three schools, $s_{1}, s_{2}, s_{3}$ and three students $i_{1}, i_{2}, i_{3}$. The school priorities and the student preferences are given by:

\[SCP_1:\qquad  \begin{array}{ccc} 
i_{1}:s_{2} \succ s_{1} \succ s_{3} & \qquad & s_{1}: i_{1} \succ i_{3} \succ i_{2} \\ 
i_{2}: s_{1}  \succ  s_{2} \succ s_{3} & \qquad & s_{2}:i_{2} \succ i_{1} \succ i_{3} \\
i_{3}: s_{1} \succ s_{2} \succ s_{3} & \qquad &  s_{3}:i_{2} \succ i_{1} \succ i_{3}
\end{array}\]
Recall that the SOSM yields the (unique stable) matching \[
M_S^{SCP_1}  = \begin{pmatrix}i_{1}& i_{2}& i_{3}\\ s_{1}&s_{2}&s_{3}\end{pmatrix},\]
Each non-singleton, nonempty subset of the set of students corresponds to a valid coalition:
\[c_{1}=\{i_{1},i_{2}\}\ \ c_{2}=\{i_{1},i_{3}\}\ \  c_{3}=\{i_{2},i_{3}\}\ \ c_{4}=\{i_{1},i_{2}, i_{3}\}\] 
For coalition $c_{1}$, the only way for both students to improve their lot is for $i_{1}$ to receive $s_{2}$ and $i_{2}$ to receive $s_{1}$. However, unless $i_{3}$ receives a better assignment with this coalition,
the resultant matching will be unstable since $i_{2}$ violates $i_{3}$'s priority at $s_{2}$. So $i_{3}$ must receive a better school if $i_{1}$ or $i_{2}$ have a chance of receiving a better assignment, so $i_{3}$ must receive $s_{1}$ or $s_{2}$. 
Thus clearly $i_{1}$ and $i_{2}$ cannot both strictly benefit. The arguments for the remaining coalitions are similar. 

The underlying intuition for the above result is much clearer than brute force computation. For a coalition of students to benefit from their falsification under SOSM, either their interests conflict, or the constraint of stability requires appeasing other students not in the coalition, and this conflicts with the interests of those in the coalition. For a deeper and more enlightening discussion, see \cite{DuFr81}. With this example and motivating theorem in mind, we now proceed to formalize our coalition model for the SCP, which we will call the Coalitional Improvement Mechanism (CIM). 

Let $I$ and $S$ be the set of students and schools respectively in a given SCP. Let $M$ be the SOSM stable matching assignment for the case where all students submit their true preferences.
A {\bf coalition} $C$ is defined in terms of a pair $(K,A)$ of subsets of the set $I$ of students. The first subset, the {\bf cabal} $K = (i_{1},i_{2}, ...,i_{|K|})$ of a coalition $C$, is a list of students such that each student $i_{k}$, $1 \leq k \leq |K|$, prefers $M[i_{k-1}]$ to $M[i_{k}]$, indices taken modulo $|K|$.  In other words, we have $M[i_{k-1}] \succ_{i_k} M[i_k]$ for $1 \leq k \leq |K|$, and a {\bf cabal loop}, written $(i_1 \rightarrow i_{|K|} \rightarrow i_{|K|-1} \rightarrow \cdots \rightarrow i_1)$, a closed chain of students each of whom would prefer the stable assignment of the person before him to his own stable assignment.
The second subset, the {\bf accomplice set}
$A=A(K)$ of cabal $K = (i_{1},i_{2},...,i_{|K|})$, is a set of students $A(K) \subset I$ such that $i \in A(K)$ if for some $i_{k} \in K$, $M[i_{k}] \succ_{i} M[i]$ and $i \succ_{M[i_{k}]} i_{k+1}$. In other words, an {\bf accomplice} is a student who in his truthful preference list ranks the stable assignment of someone in the cabal higher than his own stable assignment, while he himself is ranked higher by that school than the next member of the cabal who would prefer it to his own school. Note that $K$ and $A(K)$ may or may not be disjoint.

For any student $i \in I$ we can write the preference profile of $i$ as a disjoint union of three sets: $(P_L[i],M[i],P_R[i])$. Here the set $P_L[i]$ (respectively $P_R[i]$) is simply the list of schools on $i$'s preference profile to the left (respectively to the right) of his stable assignment $M[i]$. Let $\pi_r$ denote a random permutation of $S$. The coalition cheating procedure (CIM) is described in: 
\begin{theorem}[cf.\ Huang 2006 \cite{Hu06}]
\label{T:SCPCoalition}
Let $M$ be the SOSM matching for a given SCP when students submit their true preferences. Consider a coalition $C = (K, A(K))$, and suppose that each accomplice $i \in A(K)$ submits a falsified list of the form $(\pi_r(P_L[i]-X), M[i], \pi_r(P_R[i]\cup X))$,
where
\begin{itemize}
\item if $i \not \in K$, then $X = \{s \in M[K] \vert s = M[i_k], s \succ_i M[i], i \succ_s i_{k+1} \}$, {\sc and}
\item if $i = i_k \in A(K) \cap K$, $X = \{s \in M[K] \vert s = M[i_j], j \neq k, s \succ_{i_k} M[i_{k}], i_k \succ_s i_{j+1}\}$.
\end{itemize}
Then in the resulting matching $M^{\prime}$, $M^{\prime}[i_k] = M[i_{k-1}]$ for $i_k \in K$ and $M^{\prime}[i] = M[i]$ for $i \not \in K$.
\end{theorem}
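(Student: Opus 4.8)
The plan is to show that the submitted profile $P'$ satisfies $\mathrm{SOSM}(P') = M'$, where $M'$ is the matching in the conclusion; since SOSM always returns the student-optimal stable matching, it suffices to prove that $M'$ is the student-optimal stable matching for $(P',\mathbf{\Pi})$. The economical route is a reduction: when each school holds a single seat, an SCP with strict preferences and strict priorities \emph{is} the stable marriage problem under the dictionary (students $\leftrightarrow$ men, schools $\leftrightarrow$ women, priorities $\leftrightarrow$ women's preferences), and SOSM is exactly student-proposing deferred acceptance. Under this dictionary the data $(K,A(K))$, the decomposition $(P_L,M,P_R)$, the hidden sets $X$, and the falsified lists coincide verbatim with Huang's construction, so the statement is Huang's Theorem~2 \cite{Hu06}; the only delicate point is schools of capacity exceeding one, handled by the standard device of cloning each school into unit-capacity copies and checking the falsification descends to the clones. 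I would nevertheless also record the direct verification, since it is short and shows why the definition of $X$ is forced.

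First I would fix the shape of $M'$. Because $M[i_{k-1}] \succ_{i_k} M[i_k]$ for each $k$, the assignment $i_k \mapsto M[i_{k-1}]$ cyclically permutes $M[K] = \{M[i_1],\dots,M[i_{|K|}]\}$ among the cabal, strictly improving every $i_k$, and fixes $M[i]$ for $i \notin K$. Hence $M'$ is a genuine matching, the only schools whose occupant changes are the $M[i_k]$, and the new occupant of $M[i_k]$ is $i_{k+1}$ (since $M'[i_{k+1}] = M[i_k]$).

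Next I would show $M'$ is stable for $(P',\mathbf{\Pi})$ by excluding blocking pairs $(j,s)$; write $\succ'_j$ for $j$'s order in the submitted profile. Under $P'$ a student ranks above its own $M'$-assignment only schools it \emph{truly} prefers to $M[j]$, so any blocker satisfies $s \succ_j M[j]$. If $s \notin M[K]$ its occupant is unchanged from $M$, and stability of $M$ forbids the block. If $s = M[i_k]$, its new occupant is $i_{k+1}$, and a priority violation requires $j \succ_s i_{k+1}$; but $s \succ_j M[j]$ together with $j \succ_s i_{k+1}$ is precisely the condition making $j$ an accomplice (and is captured by the second formula for $X$ when $j \in K$), so $j$ moves $s$ into $P_R[j]\cup X$, placing $s$ below $M'[j]$ in the submitted list and killing the pair. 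Thus $X$ is exactly the set of cabal schools at which $j$ would otherwise be a spoiler, and the unfilled-school clause is handled identically. This also shows $\pi_r$ is immaterial: only which side of $M[j]$ a school lands on enters the argument, never its rank within a block.

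Finally I would upgrade $P'$-stability of $M'$ to $M' = \mathrm{SOSM}(P')$. One inequality is free: as $M'$ is $P'$-stable and SOSM is student-optimal, $\mathrm{SOSM}(P')[i] \succeq'_i M'[i]$ for all $i$. The reverse inequality, that no student receives something it strictly $P'$-prefers to $M'[i]$, is the heart of the matter and the step I expect to be the main obstacle, since one must rule out the falsification \emph{over}-improving a student past the intended cyclic shift. Here the characterization from the stability step is the lever: every school $t$ with $t \succ'_i M'[i]$ has an $M'$-occupant $h$ with $h \succ_t i$, so if $\mathrm{SOSM}(P')$ gave such a $t$ to $i$, then $P'$-stability would force the displaced $h$ to weakly improve as well, and iterating yields a chain of improving students that cannot close up in a finite market — a contradiction, exactly the bookkeeping behind student-optimality of deferred acceptance. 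In the reduction route this last step is precisely what Huang's theorem supplies, which is why citing it is the shortest option.
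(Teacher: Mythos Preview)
Your primary route---identifying the SCP (after cloning multi-seat schools into unit-capacity copies) with the stable marriage problem and invoking Huang's Theorem~2---is correct and is exactly what the paper does: it simply writes ``an easy adaptation from the analogous result of Huang \cite{Hu06}.''

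Your secondary direct verification is a welcome supplement, and the arguments that $M'$ is a genuine matching and that $M'$ is $(P',\mathbf{\Pi})$-stable are clean and illuminate why $X$ is defined as it is. But the sketched argument for the upgrade $M' = \mathrm{SOSM}(P')$ does not go through. The chain you build \emph{can} close up: you obtain a cycle $i_0, i_1, \dots, i_{\ell-1}, i_0$ with $M'[i_{j+1}] = \mathrm{SOSM}(P')[i_j] =: t_j$ and $i_{j+1} \succ_{t_j} i_j$, and this is no contradiction---it is precisely what happens whenever $M'$ is a stable matching strictly Pareto-dominated on the student side by the student-optimal one (the schools then prefer $M'$, consistent with the lattice structure). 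Stability of $M'$ yields only $\mathrm{SOSM}(P')[i] \succeq'_i M'[i]$ for all $i$; upgrading to equality is genuinely the content of Huang's proof, which proceeds via specific lemmas on how moving individual women across a man's DA-partner alters (or fails to alter) the run of deferred acceptance, not by a generic displacement-chain argument. Your instinct to defer to Huang at this point is right; the ``cannot close up'' sketch offered in its place is not a valid substitute.
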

The proof of Theorem \ref{T:SCPCoalition} is an easy adaptation from the analogous result of Huang \cite{Hu06}.

Accomplices modify their preference profiles by moving schools on the left of their stable assignment to the right of their stable assignment if they are desirable to other students in the cabal. In particular if $i$ is an accomplice, then the set $X$ of schools $i$ moves to the right of his stable assignment will consist of all the stable assignments of the members of the cabal that rank $i$ higher than the student following their stable assignment in the cabal loop. Again, this procedure is robust, i.e., a random permutation of the two sides of the stable assignment will not affect the outcome.

Let us now consider an example which we will label $SCP_2$. Let $I = \{i_1, i_2, i_3, i_4, i_5\}$ and $S = \{s_1,s_2,s_3,s_4,s_5\}$ be the set of students and schools, respectively, and let their respective preference and priority profiles be given as follows:
\[ SCP_2 : \qquad \begin{array}{ccc}
i_1: s_2 \succ s_5 \succ s_4 \succ s_3 \succ s_1 & \qquad  & s_1:i_3 \succ i_2 \succ i_4 \succ i_1 \succ i_5\\
i_2: s_2 \succ s_5 \succ s_4 \succ s_1 \succ s_3 & \qquad  &  s_2: i_4  \succ  i_5\succ i_1 \succ i_2 \succ i_3\\
i_3: s_5 \succ s_2 \succ s_1 \succ s_3 \succ s_4 & \qquad  & s_3: i_2 \succ i_3 \succ i_4 \succ i_5 \succ i_1\\
i_4: s_4 \succ s_1 \succ s_2 \succ s_3 \succ s_5 & \qquad  & s_4: i_1 \succ i_2 \succ i_3 \succ i_5 \succ i_4\\
i_5: s_5 \succ s_4 \succ s_2 \succ s_3 \succ s_1 & \qquad  & s_5: i_1 \succ i_2 \succ i_5 \succ i_3 \succ i_4
\end{array}\]

Note that the matching output by the SOSM for $SCP_2$ is:
\[ M_S^{SCP_2} = \begin{pmatrix}
i_1&i_2&i_3&i_4&i_5 \\
s_5&s_4&s_1&s_2&s_3
\end{pmatrix}\]
and has preference reverence index 10 (\S\S\ref{SS:NewCriterion}).

We now consider the following coalition $C = (K, A(K))$: Let $K = \{i_1,i_2,i_4\}$ with the cabal loop $(i_1 \rightarrow i_4 \rightarrow i_2 \rightarrow i_1)$. The accomplice set $A(K)$ is $\{i_5\}$ and the set $X$ for $i_5$ is $\{s_2,s_4\}$. In other words the only student who modifies his preference profile is $i_5$. We display his old and new profiles:
\[ i_5 \text{'s old profile}: s_5 \succ s_4 \succ s_2 \succ \underline{s_3} \succ s_1\]
\[ i_5 \text{'s new profile}: s_5 \succ \underline{s_3} \succ s_1 \succ{s_2} \succ s_4 \]
[We underlined $i_5$'s stable assignment $s_3$.] The outcome matching when we rerun SOSM is:
\[ M_C^{SCP_2} = \begin{pmatrix}
i_1&i_2&i_3&i_4&i_5 \\
s_2&s_5&s_1&s_4&s_3
\end{pmatrix}\]
which improves the outcome for all members of the cabal, does not affect the remaining students, and has preference index 6.\footnote{We note that this is also the EADAM outcome if $i_5$ consents.} We will discuss this example further in \S\S\ref{SS:CliquesDescription}.


\subsection{Selfless and hopeless students}
\label{SS:Hopeless}

In the  example above,
student $i_5$ modified his preference profile in order to change the group outcome to $M_C^{SCP_2}$ from $M_S^{SCP_2}$, but in the end, he did not improve his own assignment. In fact, in any coalition the accomplice set will include some students who do not benefit from the coalition. To understand why, we go back once again to the stable marriage problem.  In that context, Dubins and Freedman  \cite{DuFr81} showed that it is impossible for every man in a coalition to improve his assignment. In other words a subset of men cannot falsify their preference lists so that all of them get  better partners. 
Thus in Huang's coalition cheating framework, in order to improve the assignments of some members of the cabal, there have to be some selfless men who are willing to make adjustments to their preference profiles despite the fact that they themselves cannot benefit from the arrangement. Analogously in the SCP context, for cheating coalitions to work, there have to be selfless students who are willing to adjust their preferences even though this will not improve their assignment.  It should be noted that in these cases, the selfless participant does not end up with a less desirable assignment.

This situation raises the question of the feasibility of coalition cheating, as we now see that some members of the accomplice set have no obvious incentive to make the required adjustments to their preference profiles. As a possible resolution to this issue, Huang \cite{Hu06} proposes a randomized strategy in which every man can expect to marry women ranked higher on their preference lists. This strategy requires men to take risks, since some men can end up with less desirable partners. In the School Choice Problem however, students and parents are relatively risk-averse. Thus a coalition agreeing to a randomized arrangement like the one mentioned by Huang in the Stable Marriage Problem is not rationally practical in the SCP.
Moreover, in school districts where there may be many students competing against each other for a limited number of seats at desirable schools, cooperation among parents and students is not plausible. 

Nonetheless we propose that our work does not merely present a theoretical framework to investigate collaboration and cooperation issues in the context of the SCP, but in fact it can have implementable outcomes in this context. More specifically, we propose that a school district, given perfect information of all student preferences, can create ``artificial coalitions" that would result in a more efficient outcome than that of SOSM. Using the SOSM matching as a baseline, a computer program could identify all matchings that would result from all possible coalitions.\footnote{However, arguably, the computer power needed for this could be quite large.}  It would then be up to the district to decide which coalitions would be most appropriate to further its own district goals. For instance one district could select the coalition(s) which results in the matching with the most Pareto improvements on the SOSM outcome, while another might choose the coalition(s) based on minimizing the preference index, and yet another could count the number of priority violations (the matching with fewer priority violations being more ``fair'') or weight the magnitude of the various priority violations (in terms of the priority level of the violator(s)).

We have already pointed out that in order for coalitions to work, there have to be some accomplices willing to modify their preferences despite the fact that they themselves will not benefit. While some of these accomplices might benefit from different coalitions, others, it turns out, have no hope of ever benefiting from any coalition. Huang calls these \textbf{hopeless men} in the context of the stable marriage problem, and proves that there always exists at least one hopeless man for any given set of preference profiles. We now consider the analogous construct for the School Choice Problem:
\begin{definition}
Given a specific SCP, a student who cannot benefit from any Pareto improvement upon SOSM is said to be {\bf hopeless}.
\end{definition}

It turns out that there is always such a hopeless student. In other words, we have:

\begin{theorem} 
There is always at least one hopeless student when Pareto improvements are made on SOSM. More specifically, in EADAM or CIM, there is always at least one hopeless student.
\end{theorem}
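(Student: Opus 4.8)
The plan is to recast \emph{hopeless} entirely in terms of the SOSM outcome $M$. I would introduce the \emph{envy digraph} $G$ on the student set $I$, with an arc $i \to j$ exactly when $M[j] \succ_i M[i]$; a directed cycle of $G$ is then precisely a cabal loop in the sense of \S\S\ref{SS:HuangsConstruction}. The first step is to prove that a student can benefit from \emph{some} Pareto improvement of $M$ if and only if she lies on a directed cycle of $G$. For one direction, given a cycle, reassign each student on it to the school held by her successor and leave everyone else fixed; since student preferences are strict, the students on the cycle all improve strictly and no one is harmed, so the result Pareto-dominates $M$. For the converse, any $M'$ dominating $M$ permutes the occupied seats---no student can improve into an unfilled school, since $M$ is stable---and along each nontrivial cycle every student strictly improves, so each beneficiary sits on a cycle of $G$. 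Consequently a student is hopeless if and only if she lies on no directed cycle of $G$, and it suffices to exhibit one such vertex. This also disposes of the ``EADAM or CIM'' clause at once: every cabal loop is a cycle of $G$ and the EADAM outcome Pareto-dominates $M$, so a vertex off all cycles can benefit under neither mechanism.

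The indispensable hypothesis is that $M$ is the \emph{student-optimal} stable matching, not merely a stable one. I would first record the consequence that every matching Pareto-dominating $M$ is unstable: a stable dominator $M'$ would, by student-optimality, satisfy $M[i] \succeq_i M'[i]$ for all $i$, contradicting that some student strictly prefers $M'$. I would equally stress \emph{why} stability alone is insufficient, since this pinpoints the real content of the theorem: a merely stable matching can be fully cyclic, with every student on a cycle of $G$ (for example, the rank-two matching of a three-student instance with cyclically rotated preferences is stable yet everyone envies a neighbor). Thus the argument must genuinely invoke optimality to forbid full cyclicity.

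The core, mirroring Huang's hopeless-man theorem, is to show that $G$ has a vertex lying on no directed cycle. I would work from the deferred-acceptance run producing $M$ and aim for one of two extremal outcomes: a school proposed to exactly once (its holder is envied by no one, hence has no incoming arc) or a student accepted at her first choice and never displaced (no outgoing arc); either vertex is hopeless. To locate one I would trace the rejection chains: starting from any student rejected by her top choice, follow (holder of that school) $\to$ (that holder's top choice) $\to \cdots$, which, by finiteness, closes into a cycle of schools $t_1 \to \cdots \to t_L \to t_1$ whose holders $w_k = M[t_k]$ satisfy $w_{k+1} \succ_{t_{k+1}} w_k$ with $w_k$ preferring $t_{k+1}$ to $t_k$. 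Were no outside student able to block the matching that promotes every $w_k$ to $t_{k+1}$, that promotion would be a \emph{stable} strict improvement, contradicting optimality; so optimality forces an interrupter-type student wedged in priority between consecutive holders, and it is the recursion on this smaller structure that ultimately deposits a student at an unenvied school.

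The step I expect to be the main obstacle is exactly this last one: converting ``optimality forbids a fully cyclic $M$'' into the existence of a single off-cycle vertex. The difficulty is that realizing an improvement cycle generically destabilizes the matching through students \emph{outside} the cycle, so one cannot read a hopeless student off a single cycle; one must control these destabilizing students as cycles are peeled away. I would organize this as an induction on $|I|$ (with the preference reverence index of \S\S\ref{SS:NewCriterion}, which strictly decreases under Pareto domination, available as an auxiliary monotone), removing an unenvied school and its holder once produced and verifying that optimality descends to the residual instance; Huang's argument in \cite{Hu06} supplies the template for this bookkeeping. The base case and the monotonicity are routine, while the genuine work lies in the inductive identification of the extremal student and in checking that student-optimality is preserved upon restriction.
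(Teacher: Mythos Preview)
Your envy-digraph reformulation is correct and in fact coincides with the paper's own ``trading clique'' framework of \S\ref{S:Cliques}: a student benefits from some Pareto improvement of $M$ if and only if she lies on a directed cycle of $G$, so hopelessness is exactly having in-degree or out-degree zero in the cycle structure. Where your proposal diverges from the paper is in how you locate a vertex off all cycles, and here you are working much harder than necessary.

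The paper does not trace rejection chains, invoke rotations, or induct on $|I|$. It simply proves the stronger Theorem~\ref{T:HopelessStudent}: \emph{any student who proposes in the last step of the SOSM run is hopeless}. The argument is one paragraph. If $i_j$ proposes to $s_j$ in the final step and is accepted, then $s_j$ had an unfilled seat immediately before that step (otherwise $i_j$ would displace someone, who would then need to propose in a further step). Since seats in DA never reopen, $s_j$ was never full at any earlier step, and therefore \emph{no student was ever rejected from $s_j$}. But any student $i^*$ with an envy arc $i^* \to i_j$ prefers $s_j$ to $M[i^*]$ and so must have proposed to $s_j$ before proposing to $M[i^*]$---and hence must have been rejected from $s_j$. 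Contradiction. Thus $i_j$ has in-degree zero in $G$ and lies on no cycle.

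Your two candidate extremal vertices---``a school proposed to exactly once'' and ``a student accepted at her first choice and never displaced''---are each sufficient but neither is guaranteed to exist (the first fails already for multi-seat schools; the second fails in the standard Roth example $SCP_1$). The paper's choice of the \emph{last proposer} is the right extremal object: it always exists and yields the in-degree-zero conclusion directly, with no appeal to student-optimality beyond what is already encoded in the DA run, and no induction. Your rejection-chain/interrupter recursion is essentially rebuilding rotation theory to reach a conclusion that the termination of DA hands you for free.
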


In fact this follows directly from a stronger result:

\begin{theorem}
\label{T:HopelessStudent}
The students who propose in the last round of SOSM are hopeless.
\end{theorem}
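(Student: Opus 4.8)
The plan is to show that for a last-round proposer $i$ the school $s := M[i]$ it ends up at is \emph{underdemanded}, in the sense that no student strictly prefers $s$ to his own SOSM assignment, and then to argue that a student sitting at such a school cannot be touched by any Pareto improvement of $M$. Here $M$ denotes the SOSM outcome, which we may assume is stable.

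First I would record the two structural features of the deferred acceptance dynamics underlying SOSM. (i) The number of students a school tentatively holds is nondecreasing from round to round: at each step a school either accepts a proposer into a free seat or swaps its currently worst held student for a strictly more preferred proposer, and in particular a school rejects a proposer only when it is already full. (ii) In the last round $T$ every proposer is accepted with no further rejection, for otherwise a rejected student would propose again in round $T+1$, contradicting the maximality of $T$.

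Next comes the key lemma: $s = M[i]$ is underdemanded. Since $i$ is accepted at $s$ in round $T$ without displacing anyone by (ii), $s$ must have had a free seat just before round $T$; by the monotonicity in (i) it then had a free seat in every earlier round as well, so $s$ is never full before round $T$ and, invoking (ii) again, never rejects a single proposer during the entire run. Consequently, if some student $j$ preferred $s$ to his eventual assignment $M[j]$, then---because students propose in order of decreasing preference---$j$ would have proposed to $s$ at some round, been accepted, and (never being bumped, as $s$ never fills) retained there, forcing $M[j]=s$ and contradicting $s \succ_j M[j]$. Hence no student has $s \succ_j M[j]$. I expect this lemma to be the main obstacle, since it is where the precise meaning of ``last round'' and the monotone-occupancy structure of DA must be pinned down.

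Finally I would derive hopelessness. Suppose, for contradiction, that some matching $M'$ Pareto dominates $M$ with $M'[i] \succ_i M[i] = s$, so that $i$ vacates a seat at $s$. For each school $c$ write $\mathrm{Out}(c)$ for the students who leave $c$ and $\mathrm{In}(c)$ for those who newly arrive at $c$ in passing from $M$ to $M'$; every such student is a strict gainer, and by stability of $M$ (no student is matched below an unfilled school) a newcomer can only arrive at a school that was full under $M$, whence $|\mathrm{In}(c)| \le |\mathrm{Out}(c)|$ for every $c$. Because the total number of arrivals equals the total number of departures, these inequalities are forced to be equalities for every school. But the underdemand lemma gives $\mathrm{In}(s)=\emptyset$, since a newcomer to $s$ would strictly prefer $s$; hence $\mathrm{Out}(s)=\emptyset$, contradicting $i \in \mathrm{Out}(s)$. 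Therefore $i$ cannot be strictly improved, i.e.\ $i$ is hopeless. Equivalently, and more in the spirit of the cabal loops of \S\ref{SS:HuangsConstruction}, one may note that improving $i$ forces a Pareto-improving trading cycle through $i$ whose final student is assigned $s$ and strictly prefers it, again contradicting underdemand.
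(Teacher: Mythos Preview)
Your proof is correct and follows essentially the same route as the paper's: both establish that the school $s$ of a last-round proposer never filled (hence never rejected anyone, so no student strictly prefers $s$ to her SOSM seat), and then derive a contradiction by observing that any Pareto improvement moving $i$ would force some other student to take $i$'s vacated seat at $s$ and strictly prefer it. Your presentation is slightly more explicit---naming the ``underdemanded'' property and carrying out the $|\mathrm{In}(c)|\le|\mathrm{Out}(c)|$ counting---whereas the paper compresses this into a permutation/pigeonhole remark, but the underlying argument is the same.
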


\begin{proof} Let $I$ be the set of all students and $S$ be the set of all schools. Label the set of schools that get proposals in the last step of SOSM as $S_L=\{s_{1},s_{2},\ldots,s_{m}\}$ and the set of students that propose in the last step as $I_L=\{i_{1},i_{2},\ldots,i_{n}\}$.
If a student $i_{j} \in I_L$ proposes to $s_{j} \in S_L$ in the last step of SOSM, then we can conclude that prior to that step, $s_j$ had not yet filled its capacity. Otherwise $i_j$ would be replacing another student at $s_j$ and thus forcing that student to propose in a next round, which would in turn mean that the algorithm could not stop.

Assume now that $i_j \in I_L$ is not hopeless. This means that he can improve his lot via some Pareto improvement.
Let us denote the set of ALL students who are moved with this improvement $I_P = \{i_j, i', i'', ... i^{(n)}\}$. If any of these students preferred vacant seats to their assigned seats at the end of SOSM we would have a contradiction; they should have included or ranked those schools higher on their preference lists.
Thus the Pareto improvement must move all these students to seats that have been assigned under SOSM. Thus the improvement in fact should correspond to a permutation of $I_P$, and by the pigeonhole principle there exists another student $i^*$ that prefers $s_{j}$ to her original assignment school $s^*$.
(It doesn't matter if $s^*$ is from $S_L$ or not. Similarly $i^*$ may or may not belong to $I_L$).  Since in SOSM students propose to schools in the order that they rank them on their preference lists, $i^*$ must have applied to $s_{j}$, been rejected, then applied to $s^*$. This would imply that school $s_{j}$ must have filled its capacity before the last step of SOSM, which 
contradicts our assumption that $i_j$ was accepted by $s_j$ in the last step of SOSM: no filled school can receive and accept a proposal in the final step of SOSM because that would result in a student being displaced which would require another step in the mechanism.
\end{proof}

Looking now back at $SCP_1$ we see that $i_3$ is a hopeless student. In other words, $i_3$ will not benefit from any Pareto improvement to the SOSM matching. Similarly in $SCP_2$, the last proposer in the SOSM is $i_5$ who is thus a hopeless student.


\subsection{Coalitions and EADAM}
\label{SS:CoalitionsEADAM}

The SOSM has already been implemented in several school districts due to its desirable properties of strategyproofness and stability and the fact that it generates the most (Pareto) efficient assignment among all stable matchings. However, as has already been mentioned (and documented in \cite{AbPaRo09, Ke10} and elsewhere) its strict adherence to a stable outcome (emphasizing priorities) can result in substantial inefficiency. In this section we compare two efficiency-oriented modifications to SOSM, namely the EADAM from \cite{Ke10} (described in \S\S\ref{SS:Background}) and the coalition cheating SOSM improvement.

The main result of this section is:

\begin{theorem}
There exists a coalition improvement on SOSM yielding the EADAM outcome with full consent.
\end{theorem}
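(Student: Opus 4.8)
The plan is to read a single coalition directly off the comparison between the SOSM matching and the EADAM matching, and then invoke Theorem~\ref{T:SCPCoalition} to certify that this coalition reproduces the EADAM outcome. Write $M$ for the SOSM matching $M_S$ and $M^E$ for the outcome of EADAM when every interrupter consents. The only input I need from EADAM itself is that, with full consent, every student weakly prefers his EADAM assignment, i.e.\ $M^E[i] \succeq_i M[i]$ for all $i \in I$; this is exactly the efficiency property for which EADAM was designed. All remaining work is to package the difference between $M$ and $M^E$ as a cabal together with its induced accomplice set, so that the abstract machinery of Theorem~\ref{T:SCPCoalition} does the rest.

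First I would analyze the structure of the set $K = \{ i \in I : M^E[i] \neq M[i] \}$ of students whose assignment changes. Since preferences are strict and $M^E[i] \succeq_i M[i]$, each such student strictly improves. I claim these students are merely permuted among the seats they already occupied under SOSM: no student in $K$ is moved to a school that was unfilled under $M$. Indeed, by the second stability clause, no student prefers an unfilled school to his SOSM assignment, so a move to such a school would contradict $M^E[i] \succeq_i M[i]$; this is the same mechanism underlying the proof of Theorem~\ref{T:HopelessStudent}. Hence $i \mapsto M^E[i]$ restricts to a permutation of the seat set $\{ M[i] : i \in K \}$, which decomposes into disjoint cycles. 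Labeling the students in one such cycle as $i_1, i_2, \dots, i_{|K|}$ so that $M^E[i_k] = M[i_{k-1}]$ with indices mod $|K|$, the strict improvement $M^E[i_k] \succ_{i_k} M[i_k]$ becomes precisely the cabal loop condition $M[i_{k-1}] \succ_{i_k} M[i_k]$. Thus each cycle is a legitimate cabal, and its accomplice set $A(K)$ is then \emph{determined} by the formulas in the statement of Theorem~\ref{T:SCPCoalition}.

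With $K$ and $A(K)$ in hand I would form the coalition $C = (K, A(K))$ and apply Theorem~\ref{T:SCPCoalition} verbatim. It guarantees that the matching $M'$ produced by SOSM on the falsified profiles satisfies $M'[i_k] = M[i_{k-1}]$ for every cabal member and $M'[i] = M[i]$ for every $i \notin K$. By our labeling $M[i_{k-1}] = M^E[i_k]$, while for $i \notin K$ we have $M[i] = M^E[i]$ by the definition of $K$; hence $M' = M^E$ on all of $I$, which is the desired conclusion. When the permutation of seats has several cycles, I would take $K$ to be their union and superimpose the accomplice modifications cycle by cycle; because distinct cycles involve disjoint sets of schools, the corresponding sets $X$ are disjoint and the falsifications do not interfere, so a single rerun still returns $M^E$.

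The delicate point, and the step I expect to require the most care, is the structural claim of the second paragraph: that the EADAM improvement is a pure reshuffling of already-assigned seats, so that it closes up into cabal loops. Once this is established the argument is essentially mechanical, since Theorem~\ref{T:SCPCoalition} already does the heavy lifting of exhibiting accomplices whose demotions let SOSM implement the reshuffling; in particular I never have to mimic EADAM's iterative rounds. It is worth remarking that these accomplices are exactly the consenting interrupters of EADAM: demoting a school to the right of one's stable seat (the set $X$) has the same effect on a rerun of SOSM as deleting it, since the accomplice never reaches past his retained assignment. This observation is not logically needed for the proof above, but it explains why the two mechanisms coincide and lets one check in examples such as $SCP_2$ that the accomplice $i_5$ is precisely the interrupter whose consent drives EADAM.
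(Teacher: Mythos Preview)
Your proposal is correct and follows essentially the same route as the paper: define $K$ as the set of students whose assignments change, decompose the induced permutation of seats into cabal loops, attach the accomplice set, and invoke Theorem~\ref{T:SCPCoalition} to conclude $M' = M_E$. The only notable difference is that you take $A(K)$ directly from the defining formula in Theorem~\ref{T:SCPCoalition}, whereas the paper characterizes the accomplices as the consenting last interrupters of EADAM; your version is slightly more direct, and you also give a cleaner justification (via the second stability clause) for why the change set closes up into cycles, a point the paper leaves implicit.
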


More generally we will prove

\begin{theorem}
\label{T:CoalitionEADAM}
For any possible combination of consenters,  the associated EADAM outcome may be obtained by forming an appropriately designed coalition that improves on SOSM.
\end{theorem}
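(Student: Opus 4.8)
The plan is to run EADAM on the fixed consent set to produce its outcome, which I will denote $\widehat M$, and then reverse-engineer a coalition whose CIM outcome (Theorem \ref{T:SCPCoalition}) is exactly $\widehat M$. The key bridge is the remark in \S\S\ref{SS:Background}: EADAM with a given consent set produces precisely the matching obtained by rerunning SOSM after each consenting interrupter deletes the school(s) for which she is an interrupter. First I would extract the cabal. Because $\widehat M$ is a Pareto improvement on the SOSM outcome $M$ (trivially so if the consent set induces no changes, in which case the coalition is empty), the argument used in the proof of Theorem \ref{T:HopelessStudent} shows that every student whose assignment changes is relocated to a seat already occupied under $M$ -- no affected student moves into a formerly vacant seat, or she would have proposed there during SOSM. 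Hence, restricted to the students who strictly improve, $\widehat M$ induces a permutation of their SOSM seats that decomposes into disjoint cycles $K_1,\dots,K_r$; each $K_t=(i_1\to\cdots\to i_1)$ is a genuine \textbf{cabal loop}, since Pareto improvement forces $\widehat M[i_k]=M[i_{k-1}]\succ_{i_k}M[i_k]$, which is exactly the defining cabal condition.

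Next I would identify the accomplices with the consenting interrupters. An interrupter for a school $s$ was tentatively held by $s$ and later rejected, so (since students propose in order of preference) $s$ lies strictly to the left of her SOSM assignment $M[i]$; deleting such an $s$ is therefore exactly an accomplice move in the sense of Theorem \ref{T:SCPCoalition}, transferring a school out of $P_L[i]$ and into $P_R[i]\cup X$. Because Theorem \ref{T:SCPCoalition} guarantees $M'[i]=M[i]$ for every accomplice outside the cabal, such an accomplice never proposes below $M[i]$ in the falsified run, so pushing the deleted schools to the far right of $P_R[i]\cup X$ produces the same SOSM behavior as deleting them outright; combined with the robustness of CIM under the random permutation $\pi_r$, this means the accomplice falsifications and the EADAM deletions drive identical SOSM reruns. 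For each cabal $K_t$ I would take $A(K_t)$ as defined in Theorem \ref{T:SCPCoalition}, obtaining $M'[i_k]=M[i_{k-1}]$ on $K_t$ and $M'=M$ off $K_t$. Since the cycles are disjoint and each coalition fixes every student outside its own cabal, the $r$ coalitions merge into a single coalition with cabal $K=K_1\cup\cdots\cup K_r$ (or may be applied in sequence) without interference, and the combined outcome is $\widehat M$.

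The main obstacle lies in making the two descriptions match exactly. First, I must check that deleting all interrupted schools at once and rerunning SOSM a single time reproduces EADAM's iterative, round-by-round deletions -- equivalently, that the collection of consenting interrupters and their interrupted schools is the same whether gathered all at once or harvested round by round; I expect this to follow from the fact that the deleted schools are unattainable for the interrupters and so their removal creates no new proposals by those students. Second, and this is the crux, I must establish the set-theoretic identity between ``consenting interrupters together with their deleted schools'' and ``the accomplice set $A(K)$ with its associated sets $X$'' as read off from the cabal in Theorem \ref{T:SCPCoalition}. I would prove the essential inclusion -- that every accomplice move demanded by the cabal is actually performed by a consenting interrupter -- by tracing, for each cabal edge $i_{k-1}\to i_k$, the SOSM step at which $i_k$ is ejected from $M[i_{k-1}]$ and identifying the displacing student as a consenting interrupter whose deletion of that seat is precisely what frees it for the cabal.
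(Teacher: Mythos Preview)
Your approach is essentially the paper's: define the cabal $K$ as the set of students whose EADAM and SOSM assignments differ, decompose the induced permutation of $K$ into cabal loops, take the accomplice set to be the consenting last interrupters, write down the falsified lists, and invoke Theorem~\ref{T:SCPCoalition}. The paper in fact carries this out more briskly than you do---it simply declares $A(K)$ to be the consenting last-interrupter set and quotes Theorem~\ref{T:SCPCoalition} without separately verifying the inclusion you flag as ``the crux'' (that every accomplice required by the cabal definition is a consenting interrupter), so the obstacles you isolate are places where you are being more scrupulous than the published argument rather than places where your plan diverges from it.
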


The intuition behind this is that ``accomplices" can be viewed as interrupters who consent to waive their priority so that they do not start a rejection chain. By each accomplice waiving his priority - those on the rejection chain are given the opportunity to be accepted into better schools on their lists without violating the priorities of the accomplices.

\begin{proof}[Proof of Theorem \ref{T:CoalitionEADAM}]
Let $I$ and $S$ be the sets of students and schools, respectively.   Let $({\bf P}, {\bf \Pi})$ be a given school choice problem for the pair $(I,S)$, and let $W$ be the set of students who consent to waiving their priorities under the EADAM mechanism. Denote by $M_S$ and $M_E$ the SOSM and the EADAM outcome matchings of this problem, respectively. We will now construct a coalition $C$ which will result in the same outcome $M_E$. First define the cabal set $K$ to be the set of all students whose assignments are different under $M_S$ and $M_E$:
\[ K = \{ i \in I : M_S[i] \neq M_E[i] \}. \]
These are the students who benefit from the EADAM; they will also be the students who will benefit from the coalition $C$. Since every student whose assignment changes under EADAM is in $K$, we can partition $K$ into cabal loops. This is equivalent to the basic algebraic fact that any finite permutation can be written as the product of disjoint cycles. Hence an elementary algorithm to decompose $K$ into its individual cabal loops can be described as follows:
\begin{itemize}
\item \underline{Step 0}: Define a permutation $\pi_K$ of $K$ by setting $\pi_K(i^{\prime}) = i$ ({\it $i^{\prime}$ points to $i$}) if $M_S[i] = M_E[i^{\prime}]$. In words, {\it $i^{\prime}$ points to $i$ if EADAM matches $i^{\prime}$ to the school that SOSM matches $i$}.
\item \underline{Step 1}: Pick a student $i \in K$ and label her $i_{1,1}$. Then let $i_{1,2}$ be the student $\pi_K(i_{1,1})$ and more generally label $i_{1,j+1} = \pi_K(i_{1,j})$. This process will stop at some $j_1$ with $\pi_K(i_{1,j_1}) = i_{1,1}$ as $\pi_K$ is a finite permutation. Then $K_1 = ( i_{1,1} \rightarrow i_{1,2} \rightarrow  \cdots \rightarrow i_{1,j_1} \rightarrow i_{1,1})$ is a cabal loop.
\item[] And in general,
\item \underline{Step k}, k $\ge$ 1: Pick a student $i \in K$ who has not yet been assigned to a cabal loop and label her $i_{k,1}$. If none exists then the algorithm stops. Otherwise, label $\pi_K(i_{k,1})$ as $i_{k,2}$ and more generally label $i_{k,j+1} = \pi_K(i_{k,j})$. This process stops at some $j_k$ with $\pi_K(i_{k,j_k}) = i_{k,1}$ as $\pi_K$ is finite. Then $K_k = (i_{k,1} \rightarrow i_{k,2} \rightarrow  \cdots \rightarrow i_{k,j_k} \rightarrow i_{k,1})$ is a cabal loop.
\end{itemize}
Note that the algorithm has to stop because $K$ is finite. Furthermore each student in $K$ shows up in exactly one round and hence in exactly one cabal loop, because $\pi_K$ is invertible.

Next we describe how to form the accomplice set $A(K)$. A student $i$ will be in $A(K)$ if and only if the following two conditions are both satisfied:
\begin{itemize}
\item $i \in W$, or equivalently, $i$ consents to waive her priorities in the EADAM; \textsc{and}
\item There is a school $s$ such that $(i,s)$ is a last interrupter pair at some round of EADAM.
\end{itemize}

The new preference profile for an accomplice $i \in A(K)$ will be of the form
\[ ({P_L[i]-X}, M_S[i], P_R[i]\cup X), \]
where
\begin{itemize}
\item if $i \not \in K$, then $X = \{s \in M_S[K] \vert s = M_S[i_k], s \succ_i M_S[i], i \succ_s i_{k+1} \}$, {\sc and}
\item if $i = i_k \in A(K) \cap K$, $X = \{s \in M_S[K] \vert s = M_S[i_j], j \neq k, s \succ_{i_k} M_S[i_{k}], i_k \succ_s i_{j+1}\}$.
\end{itemize}
Here we are using the notation of \S\S\ref{SS:CoalitionImplementation} where $P_L[i]$ (respectively $P_R[i]$) is the list of schools on $i$'s preference profile to the left (respectively to the right) of his stable assignment $M_S[i]$.

Finally Theorem \ref{T:SCPCoalition} allows us to conclude that the outcome matching $M_C$ of $C = (K,A(K))$ will be as follows: $M_C[i] = M_S[i]$ for all $i \not \in K$, and $M_C[i_k] = M_S[i_{k+1}]$ for $i_k, i_{k+1}$ in some cabal loop $K_j$ in $K$. But then $M_C = M_E$ and we are done.
\end{proof}

In order to see the analogy between accomplices and interruptors, we analyze a minor modification of $SCP_2$ from \S\S\ref{SS:CoalitionImplementation} which we label $SCP_3$: Let $I = \{i_1,i_2,i_3,i_4,i_5\}$ and $S=\{s_1,s_2,s_3,s_4,s_5\}$ be given with the following preference and priority structures, respectively:
\[ SCP_3 : \qquad
\begin{array}{ccc}
i_1:s_1 \succ s_2 \succ s_5 \succ s_4 \succ s_3 & \qquad & s_1:i_3 \succ i_2 \succ i_4 \succ i_1 \succ i_5\\
i_2:s_2 \succ s_5 \succ s_4 \succ s_1 \succ s_3 && s_2:i_4 \succ i_5 \succ i_1 \succ i_2 \succ i_3 \\
i_3:s_5 \succ s_2 \succ s_1 \succ s_3 \succ s_4 && s_3:i_2 \succ i_3 \succ i_4 \succ i_5 \succ i_1\\
i_4:s_4 \succ s_1 \succ s_2 \succ s_3 \succ s_5 && s_4:i_1 \succ i_2 \succ i_3 \succ i_5 \succ i_4\\
i_5:s_5 \succ s_4 \succ s_2 \succ s_3 \succ s_1 && s_5:i_1 \succ i_2 \succ i_5 \succ i_3 \succ i_4
\end{array} \]
We now run the SOSM algorithm for $SCP_3$ (this is also Round 1 for EADAM assuming full consent):
\begin{center}
\small
  \begin{tabular}{ | l || c | c | c | c | c |}
    \hline
   Round 1 & $s_1$ & $s_2$ & $s_3$ & $s_4$ & $s_5$ \\ \hline
    \hline
    Step 1 & $i_1$ & $i_2$ & & $i_4$ & \sout{$i_3$}, $i_5$ \\ \hline
    Step 2 &   & $i_2$, \sout{$i_3$} & & &  \\ \hline
    Step 3 & $i_3$, \sout{$i_1$} &  & & &  \\ \hline
    Step 4 &  & $i_1$, \sout{$i_2$} & &  &  \\ \hline
    Step 5 &  & & &  & $i_2$, \sout{$i_5$} \\ \hline
    Step 6 &  & & & \sout{$i_4$}, $i_5$ &  \\ \hline
    Step 7 & $i_3$, \sout{$i_4$} & & & &  \\ \hline
    Step 8 &  & \sout{$i_1$}, $i_4$ & &  &  \\ \hline
    Step 9 &  &  & &  & $i_1$, \sout{$i_2$} \\ \hline
    Step 10 &  &  & & $i_2$, \sout{$i_5$} &  \\ \hline
    Step 11 &  & $i_4$, \sout{$i_5$} & &  &  \\ \hline
    Step 12 &  &  & $i_5$ & &  \\ \hline
    \hline
  \end{tabular}
  \normalsize
  \end{center}
The SOSM outcome is:
\[ M_S^{SCP_3} =
\begin{pmatrix}i_1&i_2&i_3&i_4&i_5 \\ s_5&s_4&s_1&s_2&s_3 \end{pmatrix}.
\]
Note that student $i_1$ is an interrupter for $s_2$ (causes $s_2$ to reject $i_2$ in step 4 and is rejected herself by $s_2$ when $i_4$ comes along, in step 8.) Student $i_2$ is an interrupter for $s_2$ (causes $s_2$ to reject $i_3$ in step 2 and is rejected herself by $s_2$ when $i_1$ comes along, in step 4.) Student $i_5$ is an interrupter for $s_4$ (causes $s_4$ to reject $i_4$ in step 6 and is rejected herself by $s_4$ when $i_2$ comes along, in step 10.) Student $i_5$ is an interrupter for $s_5$ (causes $s_5$ to reject $i_3$ in step 1 and is rejected herself by $s_5$ when $i_2$ comes along, in step 5.) Student $i_2$ is an interrupter for $s_5$ (causes $s_5$ to reject $i_5$ in step 5 and is rejected herself by $s_5$ when $i_1$ comes along, in step 9.) Thus $\{i_5,s_4\}$ is the last interrupter pair. We remove $s_4$ from $i_5$'s preference list.\footnote{In each round of EADAM, there may be multiple interrupter pairs. However, only the LATEST interrupter pair is used for the next round. For example, in Round 1, there are 5 interrupter pairs, but we only remove the latest interrupter pair $\{i_5, s_4\}$.}
\begin{center}
\small
  \begin{tabular}{ | l || c | c | c | c | c |}
    \hline
   Round 2 & $s_1$ & $s_2$ & $s_3$ & $s_4$ & $s_5$ \\ \hline
    \hline
    Step 1 & $i_1$ & $i_2$ & & $i_4$ & \sout{$i_3$}, $i_5$ \\ \hline
    Step 2 &   & $i_2$, \sout{$i_3$} & & &  \\ \hline
    Step 3 & $i_3$, \sout{$i_1$} &  & & &  \\ \hline
    Step 4 &  & $i_1$, \sout{$i_2$} & &  &  \\ \hline
    Step 5 &  & & &  & $i_2$, \sout{$i_5$} \\ \hline
    Step 6 &  & \sout{$i_1$}, $i_5$ & &  &  \\ \hline
    Step 7 &  & & & & $i_1$, \sout{$i_2$} \\ \hline
    Step 8 &  &  & & $i_2$, \sout{$i_4$} &  \\ \hline
    Step 9 &  $i_3$, \sout{$i_4$} &  & &  &  \\ \hline
    Step 10 &  & $i_4$, \sout{$i_5$} & &  &  \\ \hline
    Step 11 &  &  & $i_5$ &  &  \\ \hline
    \hline
  \end{tabular}
  \normalsize
\end{center}
The outcome is the same in Round 2. The pair $\{i_5,s_2\}$ is the last interrupter pair. We remove $s_2$ from $i_5$'s preference list.
\begin{center}
\small
  \begin{tabular}{ | l || c | c | c | c | c |}
    \hline
   Round 3 & $s_1$ & $s_2$ & $s_3$ & $s_4$ & $s_5$ \\ \hline
    \hline
    Step 1 & $i_1$ & $i_2$ & & $i_4$ & \sout{$i_3$}, $i_5$ \\ \hline
    Step 2 &   & $i_2$, \sout{$i_3$} & & &  \\ \hline
    Step 3 & $i_3$, \sout{$i_1$} &  & & &  \\ \hline
    Step 4 &  & $i_1$, \sout{$i_2$} & &  &  \\ \hline
    Step 5 &  & & &  & $i_2$, \sout{$i_5$} \\ \hline
    Step 6  &  &  & $i_5$ &  &  \\ \hline
    \hline
  \end{tabular}
  \normalsize
  \end{center}
The outcome is different in Round 3; $i_1$, $i_2$ and $i_4$'s assignments have changed. This time $\{i_5,s_5\}$ is the last interrupter pair. We remove $s_5$ from $i_5$'s preference list.
\begin{center}
\small
  \begin{tabular}{ | l || c | c | c | c | c |}
    \hline
   Round 3 & $s_1$ & $s_2$ & $s_3$ & $s_4$ & $s_5$ \\ \hline
    \hline
    Step 1 & $i_1$ & $i_2$ & $i_5$ & $i_4$ & $i_3$ \\ \hline
    \hline
  \end{tabular}
  \normalsize
  \end{center}
The last round (Round 4) takes only one step; everybody gets matched with his or her first choice. (Of course $i_5$ has had to make adjustments to his profile multiple times; his new first choice was in fact his fourth truthful choice.)
Thus EADAM with full consent\footnote{In fact we only used the consent of $i_5$.} returns the following matching:
\[ M_E^{SCP_3} =
\begin{pmatrix}i_1&i_2&i_3&i_4&i_5 \\ s_1&s_2&s_5&s_4&s_3 \end{pmatrix}.
\]
Can we find a coalition that will output this same outcome? Indeed yes! The cabal will be the set $\{i_1,i_2,i_3,i_4\}$ and the singleton accomplice will be $\{i_5\}$. The set $X$ for $i_5$ will be $X =\{s_2,s_4,s_5\}$.
Note that there are two cabal loops: $(i_1 \rightarrow i_3 \rightarrow i_1)$ and $(i_2 \rightarrow i_4 \rightarrow i_2)$.

Note that the coalition we create (and equivalently the EADAM with full consent) in this problem corresponds to a drastic improvement in the preference index. The preference index of this outcome is 3 while the preference index of the original SOSM outcome was 11.

There are indeed other coalitions that could be used for the same SCP. Take for instance the cabal to be $\{i_2,i_4\}$ and let $\{i_5\}$ to be the singleton accomplice set. Then $X = \{s_2,s_4\}$ and we get:
\[ M_C^{SCP_3} =
\begin{pmatrix}i_1&i_2&i_3&i_4&i_5 \\ s_5&s_2&s_1&s_4&s_3 \end{pmatrix}.
\]
The preference index for this matching is 7. Thus it is still an improvement by this measure on the SOSM, but the larger cabal of the previous example (which is equivalent to EADAM with $i_5$'s consent) is more optimal with respect to reducing the preference index. On the other hand it may be interesting to observe that this outcome cannot be obtained via EADAM no matter which students consent. This is due to the fact that once $i_5$ consents to waive his priorities, he has to consent fully. In other words, any other Pareto improvement involving the interrupter pairs he was a part of will also be made. This in particular implies that the converse of Theorem \ref{T:CoalitionEADAM} is not true.

Looking more closely at $SCP_3$, we notice that in practice what we have done amounts to changing the preferences for $i_5$ to
$i_5 : s_3 \succ s_4 \succ s_2 \succ s_5 \succ s_1$; this way we were able to give $i_1, i_2, i_3, i_4$ their first choice without making $i_5$ worse off. However, we can alternatively change the preferences for $i_3$ and give $i_1, i_2, i_4, i_5$ their first choice and $i_3$ her 4th choice which is essentially the same matching (also with preference index 3!) except that $i_3$ is worse off now than in the SOSM matching.

Student $i_5$ seems to be set to lose out from the beginning, even in the SOSM matching. We may ask whether this is due to the fact that $i_5$, unlike $i_3$, is not highly prioritized at any school, and how the fact that $i_5$ is a hopeless student relates to this situation. 
The crucial point is that the matching obtained via the coalition formed by changing $i_5$'s preferences, and including everyone else in the cabal, or equivalently the EADAM outcome, (Pareto) dominates the original SOSM, while the matching obtained when $i_3$ is made to change his preference list does not. Changing $i_3$'s preferences would be more objectionable than changing $i_5$'s preferences because the associated matching indeed harms somebody when compared to their stable assignment under SOSM, which is viewed as a baseline assigning an initial endowment to each student.


\subsection{Another efficiency oriented mechanism: TTC and coalitions}
\label{SS:CoalitionsTTC}

We now look for a relationship between the Top Trading Cycles mechanism and SOSM with the coalition efficiency improvement. When we incorporate coalitions into the SOSM, we  alter preferences so that some students have improved placements and no students have worse placements (relative to their standard assignments under SOSM). We now seek to understand if we can mimic the outcome of TTC via some coalition/preference manipulation of SOSM.

We begin with an example in which the TTC and SOSM find two different matchings, yet a coalition can be formed so that SOSM with this coalition results in the same matching as the TTC. Let us look once again at $SCP_1$ from \S\ref{S:Introduction}: 
There are three schools, $s_{1}, s_{2}, s_{3}$, three students $i_{1}, i_{2}, i_{3}$, and only one seat at each school. Student preferences and school priorities are given by:
\[ SCP_1 : \qquad \begin{array}{ccc}
i_{1}:s_{2} \succ s_{1} \succ s_{3} & \qquad & s_{1}: i_{1} \succ i_{3} \succ i_{2} \\
i_{2}: s_{1}  \succ  s_{2} \succ s_{3} & \qquad & s_{2}:i_{2} \succ i_{1} \succ i_{3} \\
i_{3}: s_{1} \succ s_{2} \succ s_{3} & \qquad &  s_{3}:i_{2} \succ i_{1} \succ i_{3}
\end{array}\]
Here, the SOSM finds the matching (of preference index $4$):
\[ M_S^{SCP_1} =
\begin{pmatrix}i_{1}& i_{2}& i_{3}\\ s_{1}&s_{2}&s_{3}\end{pmatrix}.
\]
The TTC finds a matching that (Pareto) dominates $M_S^{SCP_1}$ (and has preference index $2$):\footnote{Recall that this is the same as the EADAM outcome in the case of full consent: $M_T^{SCP_1}  = M_E^{SCP_1}$.}
\[ M_T^{SCP_1}  =
\begin{pmatrix}i_{1}& i_{2}& i_{3}\\ s_{2}&s_{1}&s_{3}\end{pmatrix}.
\]
If $i_{3}$ modifies her preference list such that $s_{3}$ is her first choice, then each student has a distinct first choice. Then SOSM assigns all their first choice, which yields the same matching as the TTC output for the original setup. The relevant coalition is given by $K = \{i_{1}, i_{2}\}$ and $A(K) = \{i_{3}\}$.

Similarly if we run the TTC algorithm on $SCP_2$, we see that the outcome is the same as that obtained when we use the coalition with the cabal $K = \{i_1,i_2,i_4\}$ (the cabal loop is $(i_1 \rightarrow i_4 \rightarrow i_2 \rightarrow i_1)$), the accomplice set $A(K) = \{i_5\}$, and the set $X = \{s_2,s_4\}$ for $i_5$. In other words the TTC outcome is equivalent to the coalition-adjusted outcome of SOSM with $i_5$ being the only person who needs to modify her preference profile (\S\S\ref{SS:CoalitionImplementation}, also see \S\S\ref{SS:CliquesDescription}).

With these examples as background we now prove:

\begin{theorem}
\label{T:CoalitionAdjustments}
One cannot always obtain the TTC outcome by coalition adjustments to SOSM.
\end{theorem}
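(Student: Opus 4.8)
The plan is to exploit the structural description of coalition outcomes supplied by Theorem \ref{T:SCPCoalition}. That theorem guarantees that any matching $M_C$ arising from a coalition $C=(K,A(K))$ satisfies $M_C[i]=M_S[i]$ for every $i\notin K$ and $M_C[i_k]=M_S[i_{k-1}]$ for every cabal member $i_k\in K$. By the very definition of a cabal loop, each $i_k$ strictly prefers $M_S[i_{k-1}]$ to $M_S[i_k]$, so every cabal member is made strictly better off and every other student is left untouched. Hence every coalition-adjusted matching weakly Pareto dominates the SOSM baseline, i.e. $M_C[i]\succeq_i M_S[i]$ for all $i\in I$. The decisive consequence I would isolate is that \emph{no coalition adjustment ever assigns any student a school strictly less preferred than her SOSM assignment}.

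With this monotonicity in hand, the theorem reduces to exhibiting a single SCP instance in which the TTC outcome $M_T$ demotes at least one student below her SOSM assignment, that is $M_T[i]\prec_i M_S[i]$ for some $i$. In any such instance $M_T$ cannot coincide with any coalition-adjusted matching, since the latter never move a student below her SOSM school. This would immediately establish that the TTC outcome is in general unreachable by coalition adjustments to SOSM.

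The remaining work, and where the main effort lies, is the construction of such a counterexample. Unlike the coalition mechanism, TTC compromises stability for efficiency and is therefore \emph{not} constrained to weakly Pareto dominate SOSM; I would search for an instance in which a student $i$ is propped up at a desirable school $s$ under SOSM by stability (high priority forcing $s$ to retain $i$ throughout deferred acceptance), while the trading cycles of TTC route $s$ to a different student and leave $i$ with a strictly worse school. The delicate point is tuning the priority and preference profiles so that stability supports $i$'s good assignment under deferred acceptance but the round-one pointing structure of TTC forms a cycle that excludes $i$ from $s$; I expect this balancing of the two mechanisms' incentives to be the principal obstacle. Once a candidate instance is fixed, verification is purely mechanical: I would run the deferred-acceptance steps to record $M_S$, run the TTC rounds to record $M_T$, and display the student $i$ with $M_T[i]\prec_i M_S[i]$, completing the proof.
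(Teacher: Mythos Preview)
Your proposal is correct and matches the paper's approach exactly: the paper also observes that coalition adjustments are Pareto improvements over the SOSM baseline and then exhibits a concrete instance ($SCP_4$, a minor variant of $SCP_1$ with an extra seat at $s_2$) in which the TTC outcome is Pareto incomparable to the SOSM outcome, so that some student is strictly worse off under TTC than under SOSM. The only gap in your write-up is the missing explicit counterexample, which the paper supplies and verifies mechanically just as you anticipate.
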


\begin{proof}
We construct a counterexample. We modify $SCP_1$ slightly and call the new setup $SCP_4$.
Assume there are three schools, $s_{1}, s_{2}, s_{3}$, three students $i_{1}, i_{2}, i_{3}$, and there are two seats at $s_{2}$ and one seat each at $s_{1}$ and $s_{3}$. School priorities and student preferences are as in $SCP_1$:
\[ SCP_4 : \qquad \begin{array}{ccc}
i_{1}: s_{2} \succ s_{1} \succ s_{3} & \qquad & s_{1}: i_{1} \succ i_{3} \succ i_{2} \\
i_{2}: s_{1}  \succ  s_{2} \succ s_{3} & \qquad & s_{2}:i_{2} \succ i_{1} \succ i_{3} \\
i_{3}: s_{1} \succ s_{2} \succ s_{3} & \qquad &  s_{3}:i_{2} \succ i_{1} \succ i_{3}
\end{array}\]
Here the SOSM matching is:
\[ M_S^{SCP_4} =
\begin{pmatrix}i_{1}& i_{2}& i_{3}\\ s_{2}&s_{2}&s_{1}\end{pmatrix},\]
and it has preference index $1$.

If we run the TTC for $SCP_4$, we find one cycle in the first round: $i_{1}$ points to $s_{2}$, $s_{2}$ points to $i_{2}$, $i_{2}$ points to $s_{1}$, and $s_{1}$ points back to $i_{1}$. We assign these students as desired. In the next round, $i_{3}$ is the only student left, and the unfilled schools are $s_{2}$ and $s_{3}$. $i_{3}$ points to $s_{2}$ and $s_{2}$ points back. This is a cycle, so we assign $i_{3}$ to $s_{2}$. Thus, our final matching is:
\[ M_T^{SCP_4} =
\begin{pmatrix}i_{1}& i_{2}& i_{3}\\ s_{2}&s_{1}&s_{2}\end{pmatrix},\]
which also has preference index $1$.

The two matchings are Pareto incomparable.
As coalition improvements are Pareto improvements, it is impossible to design a coalition so that the SOSM finds the same matching as the TTC.\footnote{In this particular problem, the SOSM performs undeniably better against the criteria we use to evaluate outcomes. Its outcome has low preference index, is stable and efficient, while the TTC outcome is not stable.}
\end{proof}

In the example above the SOSM outcome was efficient. However, there are cases when the SOSM outcome is not efficient, where the TTC outcome is Pareto incomparable and so cannot be obtained via a coalition improvement. Consider for instance the following school choice problem we label $SCP_5$: Let $I=\{i_1, i_2, i_3, i_4\}$ and $S=\{s_1, s_2, s_3, s_4\}$, and assume that each school has only one seat. School priorities and student preferences are given as follows:
\[ SCP_5 : \qquad
\begin{array}{ccc}
i_1: s_1 \succ s_3 \succ s_2 \succ s_4  &  \qquad  &  s_1: i_4 \succ i_3 \succ i_1 \succ i_2\\
i_2: s_1 \succ s_2 \succ s_3 \succ s_4  &  \qquad  &  s_2: i_2 \succ i_3 \succ i_1 \succ i_4\\
i_3: s_2 \succ s_1 \succ s_3 \succ s_4  & \qquad  &  s_3: i_2 \succ i_3 \succ i_1 \succ i_4\\
i_4: s_4 \succ s_3 \succ s_2 \succ s_1  &  \qquad  &   s_4: i_1 \succ i_2 \succ i_3 \succ i_4 \end{array} \]
If we run both mechanisms, we see that the respective assignments are:
\[ M_S^{SCP_5} =
\begin{pmatrix}i_1&i_2&i_3&i_4\\s_3&s_2&s_1&s_4 \end{pmatrix}
\qquad \qquad M_T^{SCP_5} = \begin{pmatrix}i_1&i_2&i_3&i_4\\s_1&s_2&s_3&s_4\end{pmatrix}\]
with preference indices both equaling $3$.
Looking at the two assignments, we see that the matching under SOSM is not efficient, and TTC and SOSM are not Pareto comparable. 
\footnote{An efficient Pareto improvement of $M_S^{SCP_5}$ with preference index $1$ can be obtained via EADAM with the consent of $i_1$, or equivalently, via a coalition with $K = \{i_2, i_3\}$, $A(K) = \{i_1\}$, and $X=\{s_1\}$ for $i_1$.}

The above discussion allows us to conclude that in case the TTC outcome  dominates the SOSM outcome, we can obtain the same outcome via an appropriate choice of coalitions (e.g.\ $SCP_1$). However when the TTC outcome is Pareto incomparable to the SOSM outcome (e.g.\ $SCP_4$, $SCP_5$), it is impossible to obtain the same outcome via a coalition adjustment.
More generally we can prove:

\begin{theorem}
The outcome of a mechanism that is Pareto incomparable to SOSM cannot be obtained by a coalition adjustment to SOSM.
\end{theorem}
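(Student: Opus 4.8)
The plan is to observe that every coalition adjustment to SOSM necessarily yields a matching that (weakly) Pareto dominates the SOSM outcome, and then to note that by definition a Pareto incomparable matching can never dominate SOSM. Thus the result is essentially a corollary of Theorem \ref{T:SCPCoalition}. Throughout, write $M_S$ for the SOSM outcome on the given problem.

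First I would invoke Theorem \ref{T:SCPCoalition}. For any coalition $C = (K, A(K))$ with the prescribed falsified accomplice lists, the resulting matching $M_C$ satisfies $M_C[i_k] = M_S[i_{k-1}]$ for every cabal member $i_k \in K$ and $M_C[i] = M_S[i]$ for every $i \notin K$. Next I would use the defining property of a cabal loop: for each $k$ we have $M_S[i_{k-1}] \succ_{i_k} M_S[i_k]$, so each cabal member strictly prefers her reassignment to her original SOSM assignment, while every student outside $K$ keeps her assignment. Provided the cabal is nonempty this says exactly that $M_C$ Pareto dominates $M_S$; if $K = \varnothing$ then trivially $M_C = M_S$. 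Either way, every outcome obtainable by a coalition adjustment to SOSM is either equal to $M_S$ or strictly Pareto dominates it.

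Finally I would close the argument by contraposition. Suppose a mechanism produces an outcome $M^*$ that is Pareto incomparable to $M_S$. By the definition of Pareto incomparability, $M^*$ neither equals $M_S$ nor Pareto dominates it (there is a student strictly worse off under $M^*$ than under $M_S$). But we have just shown that every coalition adjustment to SOSM produces a matching that weakly Pareto dominates $M_S$. Hence no coalition adjustment can yield $M^*$, which is the desired conclusion. This also recovers the specific impossibility observations already made for $SCP_4$ and $SCP_5$, where the TTC outcome was seen to be Pareto incomparable to the SOSM outcome.

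The argument carries no real technical difficulty; the only points requiring care are the degenerate empty-cabal case and stating cleanly that ``Pareto incomparable'' excludes both equality and domination, so that the single inescapable feature of coalition outcomes, namely that they weakly dominate $M_S$, suffices to rule them out.
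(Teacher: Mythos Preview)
Your argument is correct and is exactly the paper's own: the paper's entire proof is the single observation that coalition adjustments yield Pareto improvements of $M_S$, which is precisely what you extract from Theorem~\ref{T:SCPCoalition} and the cabal-loop definition. Your added care about the empty-cabal case and the meaning of Pareto incomparability just makes explicit what the paper leaves implicit.
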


This follows from the fact that coalition adjustments result in Pareto improvements.


\section{Trading Cliques for school choice}
\label{S:Cliques}

EADAM and CIM provide us with ways to systematically improve upon the SOSM matching. However, both mechanisms involve complicated procedures. For the EADAM, we need to identify interruptors in a backward order one by one and run the SOSM algorithm over and over again. For CIM, we need to form various coalitions and identify falsified preference orders that will work for each. However the ultimate goal in either case is the same: 
to Pareto improve upon SOSM while being able to justify the resulting priority violation(s).

In this section we propose another way to improve upon the SOSM matching. Although there will be priority violations in the final matching (since SOSM Pareto dominates all stable matchings), we justify these violations by noting that nobody is made worse off in their re-assignment when compared to their SOSM assignment. School priorities are respected by the SOSM matching, so there is also a baseline reverence to these constraints, though they are deemphasized in later stages.

The main idea is as follows: We begin by applying SOSM to the given student preferences and school priorities. Next with no further consideration of school priorities, we enter students into a trading market. In other words, the SOSM assignment is the starting point for the next phase of the assignment process after which priorities are ignored. For the second round, the goal is to improve school assignments from the point of view of student preferences as much as is possible.

In \S\S\ref{SS:CliquesDescription}, we describe in more detail our new mechanism, the {\it Trading Adjusted Deferred Acceptance Mechanism} (TADAM). We investigate the basic properties of TADAM and compare the outcome of TADAM with those of other standard mechanisms in \S\S\ref{SS:CliquesProperties}. In particular we discuss how coalitions and cliques relate to one another and to other mechanisms involving cycle improvements. We also comment on the implications for the school choice context.


\subsection{Description of the Trading Adjusted Deferred Acceptance Mechanism}
\label{SS:CliquesDescription}

We now develop a systematic way to find all Pareto improvements upon a predetermined matching $M$ in a given SCP. We start by associating a directed weighted graph $(V,E,w)$ to $M$ as follows: Each student $i$ is assigned a unique vertex $v_i$ in $V$. There is an edge from vertex $v_i$ to vertex $v_j$ if student $i$ desires student $j$'s assignment under the given matching at least as much as, if not more than, the school to which he himself was assigned. An edge $e$ from vertex $v_i$ to vertex $v_j$ has weight $w(e) = 0$ if student $i$ desires student $j$'s assignment under the given matching as much as, but not more than, the school to which he himself was assigned, and $w(e) = 1$ if the preference is strict. 

In the above we can identify $V$ with the set of students. With this in mind we now introduce:

\begin{definition}
\label{D:DirectedGraph}
Let $I$ and $S$ be a set of $n$ students and a set of $m$ schools, respectively, with respective preference and priority structures $({\bf P},{\bf \Pi})$. Let $M$ be a matching for the associated SCP. We say that the directed weighted graph $G_M = (V,E,w)$ is the {\bf (directed weighted) graph of the matching $M$} if $V = I$; for any pair of students $(i,j)$, there is an edge $e_{ij}$ from $i$ to $j$ if and only if $M[j] \succeq_i M[i]$; and for each edge $e_{ij} \in E$, $w(e_{ij}) = 0$ if $M[i] \succeq_i M[j]$, and  
$w(e_{ij}) = 1$ otherwise.
\end{definition}

Using this terminology, we can make the following:

\begin{definition}(cf.\ \cite[Defn.\ 1]{Er02})
Let $I$, $S$, $({\bf P},{\bf \Pi})$, $M$ and $G_M$ be given as in Defn.\ \ref{D:DirectedGraph} and let $k \in \N$. A {\bf trading clique of length $k$} (or simply a {\bf clique}) consists of a sequence $(i_1,i_2, \cdots, i_k)$ of $k$ distinct students such that for each $s < k$, there is an edge in $E$ from $v_{i_s}$ to $v_{i_{s+1}}$, there is an edge in $E$ connecting $v_{i_k}$ back to $v_{i_1}$, and either for some $s < k$, $w(e_{i_s,i_{s+1}}) = 1$ or $w(e_{i_k,i_1}) = 1$. A similar cycle where $w = 0$ on all edges is called a {\bf null clique}. A matching whose graph contains no trading cliques is {\bf acyclical}.
\end{definition}

A straightforward result then follows:
\begin{theorem}\label{T:paretocycle}
If matching $M$ (Pareto) dominates matching $M^{\prime}$, then the directed graph $G_{M^{\prime}}$ of $M^{\prime}$ admits a trading clique. Equivalently, if the directed graph of $M^{\prime}$ is acyclical, then $M^{\prime}$ is Pareto efficient. Conversely, if $M^{\prime}$ admits a trading clique, we can always find a matching $M$ which Pareto dominates $M^{\prime}$  (equivalently, the directed graph of a Pareto efficient matching is acyclical). 
\end{theorem}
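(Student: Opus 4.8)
The plan is to prove the three equivalent assertions by establishing two directions: first that the existence of a Pareto domination forces a trading clique in $G_{M'}$, and second that a trading clique in $G_{M'}$ can be used to construct a Pareto dominating matching $M$. The two ``equivalently'' reformulations are just contrapositives of these directions (acyclicity implies Pareto efficiency is the contrapositive of the first, and the directed graph of a Pareto efficient matching being acyclical is the contrapositive of the second), so I would prove the direct implications and remark that the contrapositives follow immediately.

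For the forward direction, suppose $M$ Pareto dominates $M'$. By definition $M[i] \succeq_i M'[i]$ for all $i$, with strict preference for at least one student $j_0$. The key observation is that $M$ and $M'$ differ only by a reassignment of the same multiset of schools among the students they affect, so I would restrict attention to the set $D = \{ i : M[i] \neq M'[i] \}$ of students whose assignments change. On $D$, passing from $M'$ to $M$ permutes the assigned schools (this uses that both are genuine matchings and that no student moves to a school strictly worse than under $M'$, so no previously unassigned seat is gained or lost by the affected group). Tracing this permutation gives a cycle $(i_1, i_2, \ldots, i_k)$ in which each $i_s$ receives under $M$ the school that $i_{s+1}$ held under $M'$; that is, $M'[i_{s+1}] = M[i_s] \succeq_{i_s} M'[i_s]$, which is exactly the condition for an edge $e_{i_s, i_{s+1}}$ in $G_{M'}$ by Definition \ref{D:DirectedGraph}. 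The student $j_0$ with strict improvement lies in some such cycle and furnishes an edge of weight $1$, so the cycle is a genuine trading clique rather than a null clique.

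For the converse, suppose $G_{M'}$ admits a trading clique $(i_1, \ldots, i_k)$. I would define $M$ by cyclically reassigning schools along the clique: set $M[i_s] = M'[i_{s+1}]$ (indices modulo $k$) and $M[i] = M'[i]$ for every student outside the clique. This $M$ is a legitimate matching because the clique reassigns precisely the schools $M'[i_1], \ldots, M'[i_k]$ among the same $k$ students, so each seat is still occupied by exactly one student and each student still holds one seat. Every edge condition $M'[i_{s+1}] \succeq_{i_s} M'[i_s]$ guarantees $M[i_s] \succeq_{i_s} M'[i_s]$, so no student in the clique is worse off, and students outside are unchanged; the defining clique condition that some edge has weight $1$ yields at least one student who is strictly better off. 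Hence $M$ Pareto dominates $M'$.

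The main obstacle, and the step deserving the most care, is the forward direction's claim that the affected students' reassignment is a permutation of a fixed set of schools --- in particular ruling out that some student in $M$ moves onto a seat left vacant under $M'$ or that a seat occupied under $M'$ becomes vacant under $M$. In the unit-capacity setting this is immediate, but since the framework allows schools with multiple seats and allows students to be unmatched, I would argue that because no affected student can be assigned a school strictly worse than under $M'$ (Pareto domination forbids it), and because $M$ and $M'$ agree off $D$, the total number of seats consumed at each school by students in $D$ is preserved; this lets me treat the restriction to $D$ as a bijection of seats and extract the cycle. I expect the bulk of the writing to consist of making this seat-counting bookkeeping precise, after which the extraction of the trading clique and the weight-$1$ edge is routine.
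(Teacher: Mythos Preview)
Your proposal is correct and follows essentially the same idea as the paper: both arguments trace, for each student whose assignment changes, the student who held that school under $M'$, and use the resulting ``who-takes-whose-seat'' relation to locate a cycle containing a strict improver. The only differences are cosmetic---the paper phrases the forward direction as a proof by contradiction (no clique $\Rightarrow$ a sink in the chain $\Rightarrow$ someone strictly worse off) and omits the converse entirely, whereas you argue directly via cycle decomposition of the permutation on $D$ and also supply the (easy) converse; your extra care about multi-seat capacities and unmatched students is more than the paper itself provides.
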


\begin{proof}
Let $M$ and $M^{\prime}$ be two matchings such that $M$ (Pareto) dominates $M^{\prime}$ but the directed graph of $M^{\prime}$ has no trading cliques. As $M$ dominates $M^{\prime}$, some students get strictly better off by changing from $M^{\prime}$ to $M$. We draw a directed edge from each improved student to the student who brought that school into the trade as his initial endowment. Since we assume that $G_{M^{\prime}}$ does not admit a clique, these edges cannot constitute a cycle. Hence there exists at least one student at the end of a chain of such directed edges. This student then only gives out his endowment but is not receiving any (better) school. This contradicts with the assumption of Pareto domination.
\end{proof}

Consider now the following procedure:

\begin{itemize}
\item \underline{Round 0}: Given a preference and priority profile, run the SOSM algorithm and obtain a temporary matching $M_0$.
\item \underline{Round t}, t $\ge$ 1: Given $M_{t-1}$, consider the graph $(V_t, E_t, w_t)$ of $M_{t-1}$. If there exists a student with no path through him, remove that student from the graph; his assignment under $M_t$ will be his initial endowment at the beginning of this round. If there are any trading cliques in the graph $(V_t, E_t)$, pick one. For each edge from $i$ to $j$ in this clique, let $M_t$ be the matching that assigns student $i$ the school to which $j$ was matched under $M_{t-1}$. If there is no trading clique, return $M_{t-1}$ as the outcome $M_t$ and stop.
\end{itemize}

We call this the {\bf Trading Adjusted Deferred Acceptance Mechanism} (TADAM). Note that adjusting student assignments by following a trading clique yields a Pareto improvement. Thus all subsequent outcomes Pareto dominate the SOSM matching. Looking also at when the algorithm stops we can in fact qualify the last assertion further and say that all outcomes of the TADAM are Pareto efficient Pareto dominations of the initial SOSM matching.

As can be seen from the steps of the defining algorithm above, there may be several outcomes of TADAM for a given problem. In particular in cases with multiple trading cliques the process may output different matchings depending on which cycles are selected at rounds $t \ge 1$. 
This will be clearer when we look at concrete examples, which we do next.

We begin with an example where the preference and priority structures are strict.\footnote{In such a situation, the weight function on the graph is uniformly $1$ and can be ignored.} Consider once again $SCP_2$ (\S\S\ref{SS:CoalitionImplementation}) with five students and five schools each with one seat:
\[SCP_2 : \qquad \begin{array}{ccc}
i_1: s_2 \succ s_5 \succ s_4 \succ s_3 \succ s_1 & \qquad  & s_1:i_3 \succ i_2 \succ i_4 \succ i_1 \succ i_5\\
i_2: s_2 \succ s_5 \succ s_4 \succ s_1 \succ s_3 & \qquad  &  s_2: i_4  \succ  i_5\succ i_1 \succ i_2 \succ i_3\\
i_3: s_5 \succ s_2 \succ s_1 \succ s_3 \succ s_4 & \qquad  & s_3: i_2 \succ i_3 \succ i_4 \succ i_5 \succ i_1\\
i_4: s_4 \succ s_1 \succ s_2 \succ s_3 \succ s_5 & \qquad  & s_4: i_1 \succ i_2 \succ i_3 \succ i_5 \succ i_4\\
i_5: s_5 \succ s_4 \succ s_2 \succ s_3 \succ s_1 & \qquad  & s_5: i_1 \succ i_2 \succ i_5 \succ i_3 \succ i_4
\end{array}\]
The matching under SOSM is:
\[M_S^{SCP_2} = \begin{pmatrix}
i_1&i_2&i_3&i_4&i_5 \\
s_5&s_4&s_1&s_2&s_3
\end{pmatrix},\]
and has preference index $10$. SOSM does a poor job with student preferences here. One student gets his fourth choice, three get their third choice and one gets his second choice.

For $SCP_2$, the associated SOSM matching can thus be translated into the following graph.
\[ \bfig
\node v_{i_1}(0,500)[v_{i_1}]
\node v_{i_5}(-475,154)[v_{i_5}]
\node v_{i_2}(475,154)[v_{i_2}]
\node v_{i_3}(294,-404)[v_{i_3}]
\node v_{i_4}(-294,-404)[v_{i_4}]
\arrow[v_{i_1}`v_{i_4};]
\arrow/<->/[v_{i_2}`v_{i_4};]
\arrow[v_{i_2}`v_{i_1};]
\arrow/<->/[v_{i_3}`v_{i_4};]
\arrow[v_{i_3}`v_{i_1};]
\arrow[v_{i_5}`v_{i_1};]
\arrow[v_{i_5}`v_{i_2};]
\arrow[v_{i_5}`v_{i_4};]
\efig \]

We see that if there is an arrow from $i_l$ to $i_j$ then $i_l$ would (weakly) prefer to be assigned to $M[i_j]$.  Such a swap can only be allowed if another student, $i_k$, prefers $M[i_l]$ to his own assignment, that is, only if there is a directed edge from some $v_{i_k}$ to $v_{i_l}$.  In this manner, a group of students can form a ``swap market'' and they can trade their SOSM assignments among themselves consistent with the directed graph.  Such a ``swap market'' would correspond to a cycle in the graph.  Here are four different  trading cliques
within the directed graph above (cliques denoted by unbroken arrows):
\[ \text{Cycle 1: } \bfig
\node v_{i_1}(-250,500)[v_{i_1}]
\node v_{i_5}(-725,154)[v_{i_5}]
\node v_{i_2}(225,154)[v_{i_2}]
\node v_{i_3}(44,-404)[v_{i_3}]
\node v_{i_4}(-544,-404)[v_{i_4}]
\arrow[v_{i_1}`v_{i_4};]
\arrow/--/[v_{i_2}`v_{i_4};]
\arrow/--/[v_{i_2}`v_{i_1};]
\arrow[v_{i_3}`v_{i_1};]
\arrow[v_{i_4}`v_{i_3};]
\arrow/--/[v_{i_5}`v_{i_1};]
\arrow/--/[v_{i_5}`v_{i_2};]
\arrow/--/[v_{i_5}`v_{i_4};]
\efig \qquad \qquad
 \text{Cycle 2: } \bfig
\node v_{i_1}(250,500)[v_{i_1}]
\node v_{i_5}(-225,154)[v_{i_5}]
\node v_{i_2}(725,154)[v_{i_2}]
\node v_{i_3}(544,-404)[v_{i_3}]
\node v_{i_4}(-44,-404)[v_{i_4}]
\arrow/--/[v_{i_1}`v_{i_4};]
\arrow/--/[v_{i_2}`v_{i_4};]
\arrow/--/[v_{i_2}`v_{i_1};]
\arrow/--/[v_{i_3}`v_{i_1};]
\arrow/<->/[v_{i_4}`v_{i_3};]
\arrow/--/[v_{i_5}`v_{i_1};]
\arrow/--/[v_{i_5}`v_{i_2};]
\arrow/--/[v_{i_5}`v_{i_4};]
\efig
\]

\[ \text{Cycle 3: } \bfig
\node v_{i_1}(-250,500)[v_{i_1}]
\node v_{i_5}(-725,154)[v_{i_5}]
\node v_{i_2}(225,154)[v_{i_2}]
\node v_{i_3}(44,-404)[v_{i_3}]
\node v_{i_4}(-544,-404)[v_{i_4}]
\arrow/--/[v_{i_1}`v_{i_4};]
\arrow/<->/[v_{i_2}`v_{i_4};]
\arrow/--/[v_{i_2}`v_{i_1};]
\arrow/--/[v_{i_3}`v_{i_4};]
\arrow/--/[v_{i_3}`v_{i_1};]
\arrow/--/[v_{i_5}`v_{i_1};]
\arrow/--/[v_{i_5}`v_{i_2};]
\arrow/--/[v_{i_5}`v_{i_4};]
\efig \qquad \qquad
 \text{Cycle 4: } \bfig
\node v_{i_1}(250,500)[v_{i_1}]
\node v_{i_5}(-225,154)[v_{i_5}]
\node v_{i_2}(725,154)[v_{i_2}]
\node v_{i_3}(544,-404)[v_{i_3}]
\node v_{i_4}(-44,-404)[v_{i_4}]
\arrow[v_{i_1}`v_{i_4};]
\arrow[v_{i_2}`v_{i_1};]
\arrow/--/[v_{i_3}`v_{i_4};]
\arrow/--/[v_{i_3}`v_{i_1};]
\arrow[v_{i_4}`v_{i_2};]
\arrow/--/[v_{i_5}`v_{i_1};]
\arrow/--/[v_{i_5}`v_{i_2};]
\arrow/--/[v_{i_5}`v_{i_4};]
\efig
\]

 We list the assignments corresponding to each of the four cliques:
 \[ M_1 =
\begin{array}{ccc}
i_1: \underline{s_2} \succ s_5 \succ s_4 \succ s_3 \succ s_1 \\
i_2:s_2 \succ s_5 \succ \underline{s_4} \succ s_1 \succ s_3 \\
i_3:\underline{s_5} \succ s_2 \succ s_1 \succ s_3 \succ s_4 \\
i_4:s_4 \succ \underline{s_1}\succ s_2 \succ s_3 \succ s_5 \\
i_5:s_5 \succ s_4 \succ s_2 \succ \underline{s_3} \succ s_1
\end{array} \qquad
M_2 = \begin{array}{ccc}
i_1:  s_2 \succ \underline{s_5}\succ s_4 \succ s_3 \succ s_1  \\
i_2: s_2 \succ s_5 \succ \underline{s_4} \succ s_1 \succ s_3 \\
i_3: :s_5 \succ \underline{s_2} \succ s_1 \succ s_3 \succ s_4 \\
i_4: s_4 \succ \underline{s_1} \succ s_2 \succ s_3 \succ s_5 \\
i_5: s_5 \succ s_4 \succ s_2 \succ \underline{s_3}\succ s_1
\end{array}\]
\[M_3 = \begin{array}{ccc}
i_1: s_2 \succ \underline{s_5}\succ s_4 \succ s_3 \succ s_1 \\
i_2:\underline{s_2} \succ s_5 \succ s_4 \succ s_1 \succ s_3 \\
i_3:s_5 \succ s_2 \succ \underline{s_1} \succ s_3 \succ s_4 \\
i_4:\underline{s_4}\succ s_1 \succ s_2 \succ s_3 \succ s_5 \\
i_5:s_5 \succ s_4 \succ s_2 \succ \underline{s_3} \succ s_1
\end{array} \qquad
M_4 = \begin{array}{ccc}
i_1: \underline{s_2} \succ s_5 \succ s_4 \succ s_3 \succ s_1 \\
i_2: s_2 \succ \underline{s_5} \succ s_4 \succ s_1 \succ s_3 \\
i_3: s_5 \succ s_2 \succ \underline{s_1} \succ s_3 \succ s_4 \\
i_4: \underline{s_4} \succ s_1 \succ s_2 \succ s_3 \succ s_5 \\
i_5: s_5 \succ s_4 \succ s_2 \succ \underline{s_3}\succ s_1
\end{array}\]

Observe that matchings 1, 3, and 4 are Pareto efficient but matching 2 is not. In fact, if we draw the directed graph of matching 2, we see that there is another cycle between $i_3$ and $i_1$. Thus we could continue with another clique, which would result in matching 1.

The preference reverence index for all three efficient matchings is 6. This raises the question of what efficient matching should be chosen in case of multiple efficient matchings. In this specific example, all three matchings give two students their top choice, one student her second choice, one student her third choice, and one student her fourth choice. Note that matching 4 is the one obtained earlier via EADAM with the consent of $i_5$ and, equivalently, via a coalition with the cabal $K = \{i_1,i_2,i_4\}$ (the cabal loop is $(i_1 \rightarrow i_4 \rightarrow i_2 \rightarrow i_1)$), the accomplice set $A(K) = \{i_5\}$, and the set $X = \{s_2,s_4\}$ for $i_5$  (cf.\ \S\S\ref{SS:CoalitionImplementation}). We also saw in \S\S\ref{SS:CoalitionsTTC} that this is exactly the TTC outcome.

Note that in all these cases, $i_5$'s assignment stays the same, i.e., $i_5$ is a hopeless student a la \S\S\ref{SS:Hopeless}. Looking at the graph, we see that there is no path passing through $i_5$; there is no chance for his situation to be improved. We can simplify the graph by taking out the vertex corresponding to $i_5$.

Let us look now at $SCP_3$ (\S\ref{SS:CoalitionsEADAM}) and see what TADAM would yield in that situation. Recall that the preference and priority profiles were as follows::
\[ SCP_3 : \qquad
\begin{array}{ccc}
i_1:s_1 \succ s_2 \succ s_5 \succ s_4 \succ s_3 & \qquad & s_1:i_3 \succ i_2 \succ i_4 \succ i_1 \succ i_5\\
i_2:s_2 \succ s_5 \succ s_4 \succ s_1 \succ s_3 && s_2:i_4 \succ i_5 \succ i_1 \succ i_2 \succ i_3 \\
i_3:s_5 \succ s_2 \succ s_1 \succ s_3 \succ s_4 && s_3:i_2 \succ i_3 \succ i_4 \succ i_5 \succ i_1\\
i_4:s_4 \succ s_1 \succ s_2 \succ s_3 \succ s_5 && s_4:i_1 \succ i_2 \succ i_3 \succ i_5 \succ i_4\\
i_5:s_5 \succ s_4 \succ s_2 \succ s_3 \succ s_1 && s_5:i_1 \succ i_2 \succ i_5 \succ i_3 \succ i_4
\end{array} \]
The SOSM matching for this problem was:
\[ M_S^{SCP_3} =
\begin{pmatrix}i_1&i_2&i_3&i_4&i_5 \\ s_5&s_4&s_1&s_2&s_3 \end{pmatrix}
\]
and had preference index $11$. We begin by drawing the directed graph of this matching:
\[ \bfig
\node v_{i_1}(0,500)[v_{i_1}]
\node v_{i_5}(-475,154)[v_{i_5}]
\node v_{i_2}(475,154)[v_{i_2}]
\node v_{i_3}(294,-404)[v_{i_3}]
\node v_{i_4}(-294,-404)[v_{i_4}]
\arrow/<->/[v_{i_1}`v_{i_3};]
\arrow[v_{i_1}`v_{i_4};]
\arrow/<->/[v_{i_2}`v_{i_4};]
\arrow[v_{i_2}`v_{i_1};]
\arrow/<->/[v_{i_3}`v_{i_4};]
\arrow[v_{i_5}`v_{i_1};]
\arrow[v_{i_5}`v_{i_2};]
\arrow[v_{i_5}`v_{i_4};]
\efig \]
This looks very similar to the directed graph of $SCP_2$; the only difference is that the edge between $v_{i_1}$ and $v_{i_3}$ is double sided.
Once again no path goes through $v_{i_5}$. Therefore the trading cliques afforded by $SCP_3$ will be all the trading cliques afforded by $SCP_2$, with the addition of the following:
\[ \text{Cycle 5: } \bfig
\node v_{i_1}(-250,500)[v_{i_1}]
\node v_{i_5}(-725,154)[v_{i_5}]
\node v_{i_2}(225,154)[v_{i_2}]
\node v_{i_3}(44,-404)[v_{i_3}]
\node v_{i_4}(-544,-404)[v_{i_4}]
\arrow/<->/[v_{i_1}`v_{i_3};]
\arrow/--/[v_{i_1}`v_{i_4};]
\arrow/--/[v_{i_2}`v_{i_4};]
\arrow/--/[v_{i_2}`v_{i_1};]
\arrow/--/[v_{i_4}`v_{i_3};]
\arrow/--/[v_{i_5}`v_{i_1};]
\arrow/--/[v_{i_5}`v_{i_2};]
\arrow/--/[v_{i_5}`v_{i_4};]
\efig \qquad \qquad
 \text{Cycle 6: } \bfig
\node v_{i_1}(0,500)[v_{i_1}]
\node v_{i_5}(-475,154)[v_{i_5}]
\node v_{i_2}(475,154)[v_{i_2}]
\node v_{i_3}(294,-404)[v_{i_3}]
\node v_{i_4}(-294,-404)[v_{i_4}]
\arrow[v_{i_1}`v_{i_3};]
\arrow/--/[v_{i_1}`v_{i_4};]
\arrow[v_{i_2}`v_{i_1};]
\arrow[v_{i_3}`v_{i_4};]
\arrow[v_{i_4}`v_{i_2};]
\arrow/--/[v_{i_5}`v_{i_1};]
\arrow/--/[v_{i_5}`v_{i_2};]
\arrow/--/[v_{i_5}`v_{i_4};]\efig
\]
The improved matchings associated to these are:
 \[ M_1 =
\begin{array}{ccc}
i_1: s_1 \succ \underline{s_2} \succ s_5 \succ s_4 \succ s_3  \\
i_2:s_2 \succ s_5 \succ \underline{s_4} \succ s_1 \succ s_3 \\
i_3:\underline{s_5} \succ s_2 \succ s_1 \succ s_3 \succ s_4 \\
i_4:s_4 \succ \underline{s_1}\succ s_2 \succ s_3 \succ s_5 \\
i_5:s_5 \succ s_4 \succ s_2 \succ \underline{s_3} \succ s_1
\end{array} \qquad
M_2 = \begin{array}{ccc}
i_1:  s_1 \succ s_2 \succ \underline{s_5}\succ s_4 \succ s_3\\
i_2: s_2 \succ s_5 \succ \underline{s_4} \succ s_1 \succ s_3 \\
i_3: :s_5 \succ \underline{s_2} \succ s_1 \succ s_3 \succ s_4 \\
i_4: s_4 \succ \underline{s_1} \succ s_2 \succ s_3 \succ s_5 \\
i_5: s_5 \succ s_4 \succ s_2 \succ \underline{s_3}\succ s_1
\end{array}\]
\[M_3 = \begin{array}{ccc}
i_1: s_1 \succ s_2 \succ \underline{s_5}\succ s_4 \succ s_3 \\
i_2:\underline{s_2} \succ s_5 \succ s_4 \succ s_1 \succ s_3 \\
i_3:s_5 \succ s_2 \succ \underline{s_1} \succ s_3 \succ s_4 \\
i_4:\underline{s_4}\succ s_1 \succ s_2 \succ s_3 \succ s_5 \\
i_5:s_5 \succ s_4 \succ s_2 \succ \underline{s_3} \succ s_1
\end{array} \qquad
M_4 = \begin{array}{ccc}
i_1: s_1 \succ \underline{s_2} \succ s_5 \succ s_4 \succ s_3 \\
i_2: s_2 \succ \underline{s_5} \succ s_4 \succ s_1 \succ s_3 \\
i_3: s_5 \succ s_2 \succ \underline{s_1} \succ s_3 \succ s_4 \\
i_4: \underline{s_4} \succ s_1 \succ s_2 \succ s_3 \succ s_5 \\
i_5: s_5 \succ s_4 \succ s_2 \succ \underline{s_3}\succ s_1
\end{array}\]
\[M_5 = \begin{array}{ccc}
i_1: \underline{s_1} \succ s_2 \succ {s_5}\succ s_4 \succ s_3 \\
i_2: {s_2} \succ s_5 \succ \underline{s_4} \succ s_1 \succ s_3 \\
i_3: \underline{s_5} \succ s_2 \succ {s_1} \succ s_3 \succ s_4 \\
i_4: {s_4}\succ s_1 \succ \underline{s_2} \succ s_3 \succ s_5 \\
i_5:s_5 \succ s_4 \succ s_2 \succ \underline{s_3} \succ s_1
\end{array} \qquad
M_6 = \begin{array}{ccc}
i_1: \underline{s_1} \succ s_2 \succ {s_5}\succ s_4 \succ s_3 \\
i_2: {s_2} \succ \underline{s_5} \succ {s_4} \succ s_1 \succ s_3 \\
i_3: {s_5} \succ \underline{s_2} \succ {s_1} \succ s_3 \succ s_4 \\
i_4: \underline{s_4}\succ s_1 \succ {s_2} \succ s_3 \succ s_5 \\
i_5:s_5 \succ s_4 \succ s_2 \succ \underline{s_3} \succ s_1
\end{array}\]
with preference indices $7$, $9$, $7$, $7$, $7$, and $5$, respectively.
Note that none of these matchings is efficient. In other words if we run the TADAM algorithm for this problem, no matter which trading clique is picked at Round 1, we will be able to find a second trading clique to continue the process. For instance the directed graph for $M_6$ contains a single two-sided edge between $v_{i_2}$ and $v_{i_3}$, and three unrequited edges coming out of $v_{i_5}$:
\[ \bfig
\node v_{i_1}(0,500)[v_{i_1}]
\node v_{i_5}(-475,154)[v_{i_5}]
\node v_{i_2}(475,154)[v_{i_2}]
\node v_{i_3}(294,-404)[v_{i_3}]
\node v_{i_4}(-294,-404)[v_{i_4}]
\arrow/<->/[v_{i_2}`v_{i_3};]
\arrow[v_{i_5}`v_{i_1};]
\arrow[v_{i_5}`v_{i_2};]
\arrow[v_{i_5}`v_{i_4};]
\efig \]
which yields a single trading clique between $i_2$ and $i_3$. Applying the TADAM algorithm one round further, we get the effective matching:
\[ M_E^{SCP_3} =
\begin{pmatrix}i_1&i_2&i_3&i_4&i_5 \\ s_1&s_2&s_5&s_4&s_3 \end{pmatrix}
\]
with preference index $3$. This is precisely the same matching as found by EADAM with full consent or alternatively with the coalition described in \S\S\ref{SS:CoalitionsEADAM}.


\subsection{Properties of TADAM}
\label{SS:CliquesProperties}

We begin this section with an analysis of the performance of TADAM under strategic action. We first state a key result from \cite{Ke10}:
\begin{prop}[Prop.4 in \cite{Ke10}] No Pareto efficient mechanism that can Pareto improve upon SOSM is fully immune to strategic action.\end{prop}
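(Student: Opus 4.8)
The plan is to argue by contradiction, turning the promised efficiency gain over SOSM into an exploitable priority violation. Suppose some mechanism $\phi$ is Pareto efficient, is immune to strategic action (strategyproof for the students), and Pareto improves on SOSM at some profile $({\bf P},{\bf \Pi})$, so that $\mu := \phi({\bf P},{\bf \Pi})$ strictly Pareto dominates $M_S := \mathrm{SOSM}({\bf P},{\bf \Pi})$. First I would record the single structural fact we need about SOSM, already noted in \S\S\ref{SS:Background}: the SOSM outcome Pareto dominates every stable matching. Hence any matching that strictly dominates $M_S$ cannot itself be stable, so $\mu$ is unstable and admits a priority-violating pair $(i,s)$ in the sense of \S\S\ref{SS:Notation} — a student $i$ with $s \succ_i \mu[i]$ together with a student $j$ such that $\mu[j]=s$ and $i \succ_s j$. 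This is the clean, citable half of the argument.

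The heart of the proof is then to convert this priority violation into a profitable misreport, contradicting strategyproofness. The obstacle is that the hypothesis only hands us one instance where $\phi$ beats SOSM, and we do not get to choose it; so rather than manipulate at $({\bf P},{\bf \Pi})$ directly, I would transport the situation to a controlled instance. Roth's example $SCP_1$ is the canonical witness: there the unique stable matching $M_S^{SCP_1}$ is strictly dominated by the efficient matching $M_E^{SCP_1}$, and $M_E^{SCP_1}$ is the only efficient matching dominating it, so whenever $\phi$ is efficient and improves on SOSM on that instance it must output exactly $M_E^{SCP_1}$, in which $i_2$ occupies $s_1$ while the higher-priority $i_3$ is pushed to $s_3$ — precisely the violation $(i,s) = (i_3,s_1)$. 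I would then build a second profile by letting the harmed high-priority student (here $i_3$), or the occupant whose seat is coveted, submit a truncated or reordered list making the coveted school her only possible improvement; efficiency of $\phi$ at the new profile, together with the priority structure, should leave $\phi$ no alternative but to hand that student the better seat, exhibiting a strict gain from lying.

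I expect the main difficulty to be exactly this cross-profile bookkeeping: one must compare $\phi$ on two profiles and invoke Pareto efficiency at both to pin down the responses, since $\phi$ is otherwise entirely unconstrained. The delicate point is choosing the misreport so that efficiency forces $\phi$ to respect $i$'s violated priority once the alternatives are rendered unacceptable — yet honoring that priority is the very deviation that strategyproofness forbids. This mirrors the trilemma behind Roth's classical impossibility for stable mechanisms \cite{RoSo90, Ro82} and Kesten's original argument \cite{Ke10}; the only new wrinkle is that stability is weakened to the hypothesis ``strictly dominates SOSM,'' which by the domination property still suffices to seed the instability that drives the manipulation.
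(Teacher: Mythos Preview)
The paper does not supply its own proof of this proposition; it is quoted verbatim as Kesten's Proposition~4 and then applied as a black box to conclude that TADAM is not strategyproof. There is therefore nothing in the paper to compare your sketch against.

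On the sketch itself, two genuine gaps. First, your treatment of the hypothesis is internally inconsistent. You write that ``the hypothesis only hands us one instance where $\phi$ beats SOSM, and we do not get to choose it,'' and then immediately ``transport the situation'' to $SCP_1$. That transport has no content: either the hypothesis is that $\phi$ weakly Pareto dominates SOSM at \emph{every} profile (this is Kesten's reading, and under it you may work at $SCP_1$ directly, so the worry is misplaced), or it only guarantees a single unspecified instance of strict improvement, in which case nothing lets you relocate to $SCP_1$ and the whole $SCP_1$ discussion is irrelevant. You must commit to one reading and argue from it.

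Second, even granting the $SCP_1$ reduction, your proposed manipulator is the wrong student. You suggest letting $i_3$, the student whose priority at $s_1$ is violated, truncate so that ``efficiency forces $\phi$ to hand that student the better seat.'' But $i_3$ is exactly the hopeless student of Theorem~\ref{T:HopelessStudent}: \emph{every} matching that weakly Pareto dominates $M_S^{SCP_1}$ assigns $i_3$ to $s_3$, regardless of what $i_3$ reports, because any Pareto improvement over SOSM at the (mis)reported profile still leaves the last proposer where she was. So no misreport by $i_3$ can ever extract $s_1$ from a mechanism that dominates SOSM, and the contradiction you are aiming for does not materialize. Kesten's actual argument pins down $\phi$ at two nearby profiles (via Pareto efficiency plus domination of SOSM at both) and finds a student, not the one whose priority is violated, for whom the two pinned-down outputs differ favorably; the ``delicate cross-profile bookkeeping'' you flag is real, but as written it is aimed at a student who provably cannot gain.
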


Since TADAM produces Pareto improvements of SOSM, it follows then that it is not strategy-proof. This is consistent with other improvements upon SOSM. However, lack of strategy-proofness does not imply easy manipulability. The feasibility of manipulation decreases as the size of the market (school district) increases. This is analogous to our earlier assertion that substantial coalitions are hard to form naturally on their own in the context of SCP. Students do not have complete information about preference profiles of other students, so potential profitable strategic behaviors are highly unlikely. Formulating an alternative ranked list which yields a better assignment, even with complete information on all other students will most likely not be feasible for individual students. 

Making the above more precise in technical language we first split the schools into categories in terms of perceived quality. Then we can prove (cf.\ Theorem 2 of \cite{Ke10}, see Appendix):
\begin{theorem}
\label{T:StrategyKesten}
Let the set of schools $S$ be partitioned into categories of perceived quality: 
\[ S =  S_1 \cup  S_2 \cup \cdots \cup S_m, \qquad \textmd{ such that } \qquad S_i \cap S_j = \emptyset \textmd{ if } i \neq j,\] 
such that for any $k,l \in \{1,\cdots,m\}$ with $k < l$, each student prefers any school in $S_k$ to any school in $S_l$. Let each student's information be \textit{symmetric} for any two schools in the same perceived quality category. Then for any student the strategy of truth telling stochastically dominates any other strategy when other students behave truthfully. Thus truth telling is an ordinal Bayesian Nash equilibrium of the preference revelation game under TADAM.
\end{theorem}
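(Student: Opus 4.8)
The plan is to follow the template of Kesten's Theorem~2, exploiting the two features of this coarse-preference domain: the partition $S = S_1 \cup \cdots \cup S_m$ collapses each student's relevant preferences to a single ordering of the blocks, and the symmetric-information hypothesis makes any two schools inside one block interchangeable from the reporting student's point of view. Fix a student $i$ and assume every other student reports truthfully. First I would reduce the statement to a finite list of probability inequalities. Write $T_k = S_1 \cup \cdots \cup S_k$ for the upper contour sets of $i$'s true preferences (any school in $S_k$ is preferred to any school in $S_l$ when $k<l$). Because $i$'s beliefs, together with the symmetric randomization of the mechanism, are invariant under permuting schools within a single block, the outcome distribution produced by any report $Q_i$ is \emph{block-exchangeable} and depends only on the order in which $Q_i$ lists the blocks; reports differing by within-block permutations induce identical distributions. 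Since an upper contour set that cuts a block then contributes proportionally, dominance over all upper contour sets collapses to dominance of the block-cumulative probabilities. Writing $q_k(Q)$ for the probability that $i$'s TADAM assignment lies in $T_k$ under report $Q$ (others truthful), ordinal stochastic dominance of the truthful report over $Q_i$ is thus equivalent to $q_k(\mathrm{truth}) \ge q_k(Q_i)$ for every $k$, so it suffices to prove these.

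Next I would reduce to a single adjacent block-transposition. By the symmetry reduction only the block order of $Q_i$ matters, and any block order can be sorted into the truthful order by a sequence of adjacent transpositions, each pulling a higher-true-category block in front of a lower one. A telescoping induction then reduces the theorem to the claim that restoring the correct relative order of two adjacent blocks weakly increases $q_k$ for every $k$. For the SOSM baseline this is immediate from strategyproofness of SOSM for students: realization by realization (fixing the other reports and the random tie-break) the truthful SOSM assignment is weakly preferred, under $i$'s true preferences, to the misreported one, hence lands in a weakly better block; averaging over realizations gives the inequality for the baseline matching.

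The hard part will be propagating this inequality through the trading stage, since TADAM is not strategyproof (Prop.~4 of \cite{Ke10}) and an inversion in $i$'s report perturbs both the baseline matching fed into the market and the edge set of the graph $G_M$ (Defn.~\ref{D:DirectedGraph}) in every round at once, so no realization-by-realization domination is immediate. The approach I would take is to build an explicit coupling of the clique-selection randomness under the truthful and the inverted reports, maintaining as an invariant through each round $t \ge 1$ that $i$'s current holding under the truthful report lies in a weakly better block than under the inverted one. Two structural facts should drive the coupling: a truthful reporter has an outgoing edge to every student currently holding a school he truly weakly prefers, so correcting the block order never deletes an edge pointing into a $T_k$-holder and only enlarges the family of cliques that can route $i$ into $T_k$; and, by Theorem~\ref{T:paretocycle}, executing any such clique is itself a Pareto improvement, which for a truthful reporter means a trade into a weakly better block. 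Granting the invariant, each round preserves the weakly-better-block relation, so for the coupled randomness the final TADAM assignment under truth lies in a weakly better block than under $Q_i$, giving $q_k(\mathrm{truth}) \ge q_k(Q_i)$. Summing the per-transposition inequalities along the sorting sequence yields ordinal stochastic dominance of truth over every $Q_i$, i.e.\ truth-telling is an ordinal Bayesian Nash equilibrium under TADAM. The single genuine obstacle, and the place where the symmetric-information and category hypotheses are indispensable, is verifying that the coupling preserves the invariant across a round in which $i$ is re-routed by a clique that a misreport had opened; I expect this to require a careful case analysis of how an inverted edge can enter or leave a clique passing through $i$.
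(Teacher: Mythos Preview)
Your proposal has a genuine gap, and you flag it yourself: the coupling of the clique-selection randomness across the two reports is never carried out, only promised. The missing structural lemma that would dissolve this difficulty is short: under the hypotheses of the theorem, \emph{every} trading clique formed at any round of TADAM lies entirely within a single perceived-quality block $S_k$. Indeed, if $(i_1,\dots,i_n)$ is a clique at round $t$ and $M_{t-1}[i_r]\in S_{k_r}$, then the edge from $i_r$ to $i_{r+1}$ forces $M_{t-1}[i_{r+1}]\succeq_{i_r} M_{t-1}[i_r]$, hence $k_{r+1}\le k_r$; chasing around the cycle gives $k_1\le k_n\le\cdots\le k_1$, so all $k_r$ coincide. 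Consequently the block of each student's TADAM assignment equals the block of her SOSM assignment, and your block-cumulative probabilities $q_k$ are determined entirely at the SOSM stage. Strategyproofness of SOSM then gives $q_k(\mathrm{truth})\ge q_k(Q_i)$ realization by realization, with no coupling through the trading rounds needed at all.

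The paper's route is different and sidesteps both your coupling and your exchangeability reduction. It verifies that TADAM satisfies Ehlers' two axioms, \emph{anonymity} and \emph{positive association}, and then invokes Ehlers' general theorem that any anonymous, positively-associated mechanism makes truth-telling stochastically dominant under symmetric information. Anonymity is routine; positive association is precisely where the ``cliques stay in one block'' lemma enters, since it guarantees that swapping the ranks of $s$ and $s'$ in $i$'s report (with $s'\succ_i s$ and $i$ assigned $s$) cannot change $i$'s block and cannot create any new clique carrying $i$ past $s$. This route also delivers stochastic dominance at \emph{every} upper contour set, not just at block boundaries; your collapse to block-cumulative probabilities asserts that within-block permutations of $i$'s report induce identical outcome distributions, but anonymity plus symmetric information only gives that they induce \emph{permuted} distributions, so that reduction would itself need further justification.
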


A well-studied method of strategic action by students is truncation manipulation, one of the few tools available in such a largely incomplete information matching game \cite{Eh08}. However it is easy to see that in TADAM no student benefits from truncating her preference list; any such truncation results in fewer cliques and fewer opportunities for that student (and for others) to improve her lot.

Note also that TADAM is \textit{group} strategyproof because every possible Pareto improvement has been made and no possible coalition can be formed.

Another prominent feature of TADAM is its efficiency. Each trading clique followed improves the efficiency of the outcome, neutralizing to an extent the inefficiency caused by SOSM. As each such improvement creates a Pareto domination of the previous matching and decreases the preference reverence index, at the end of the algorithm, we stop at a Pareto efficient matching. In fact TADAM produces all efficient matchings that Pareto dominate SOSM. We prove a slightly stronger result:
\begin{prop}If matching $M$ (Pareto) dominates the SOSM matching, $M^{*}$, then $M$ is realizable by TADAM up to null cliques. \end{prop}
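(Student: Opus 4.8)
The plan is to realize $M$ by decomposing the reassignment that carries $M^{*}$ into $M$ as a product of disjoint cycles, to recognize each such cycle as a trading clique (or a null clique) of the graph $G_{M^{*}}$, and then to feed the trading-clique cycles to TADAM one round at a time. The first ingredient is an occupied-seat lemma that comes straight out of SOSM stability: if $M$ (Pareto) dominates $M^{*}$ and $M[i] \succ_i M^{*}[i]$, then $M[i]$ is fully occupied under $M^{*}$. Otherwise $i$ is matched under $M^{*}$ to a strictly less preferred school while a seat at $M[i]$ sits empty, contradicting condition (2) in the definition of a stable matching (this is the same mechanism behind the hopeless-student argument of Theorem \ref{T:HopelessStudent}). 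Hence every strict improver moves into a seat some student held under $M^{*}$, so the strictly-improving part of $M$ merely reshuffles students among seats occupied in $M^{*}$.

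Next I would turn the reassignment into a permutation. Let $K = \{ i \in I : M[i] \neq M^{*}[i] \}$, and discard from $K$ any student who merely moved to an indifferent vacant seat; the resulting matching $M'$ is welfare-identical to $M$ and still dominates $M^{*}$, so it suffices to realize $M'$. Exactly as in the proof of Theorem \ref{T:CoalitionEADAM}, define $\pi(i') = i$ whenever $M^{*}[i] = M'[i']$. By the lemma the target school is occupied under $M^{*}$, and (with one seat per school; the general case is identical) its occupant is again in $K$ and is uniquely determined, so $\pi$ is a permutation of $K$, which I write as a product of disjoint cycles.

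I then claim each such cycle $(i_1 \rightarrow i_2 \rightarrow \cdots \rightarrow i_k \rightarrow i_1)$ is a clique of $G_{M^{*}}$: since $M^{*}[i_{s+1}] = M'[i_s] \succeq_{i_s} M^{*}[i_s]$ by Pareto domination, the edge $e_{i_s i_{s+1}} \in E$ exists, and carries weight $1$ exactly when $i_s$ strictly improves. So a cycle containing a strict improver is a genuine trading clique, while a cycle all of whose members are indifferent is a null clique. To finish, run TADAM by selecting the trading-clique cycles of $\pi$, one per round. After processing some of them the current matching agrees with $M'$ on the processed students and with $M^{*}$ elsewhere; for any not-yet-processed cycle the inequalities $M^{*}[i_{s+1}] \succeq_{i_s} M^{*}[i_s]$ are unchanged because those students still sit at their $M^{*}$-seats, so the cycle survives as a trading clique and may be chosen next (and its members are never deleted in the pruning step, each lying on its own cycle). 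Once the trading-clique cycles are exhausted, the reached matching equals $M'$, hence equals $M$, up to the remaining null cliques, which TADAM declines to trade.

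The main obstacle I anticipate is the combination of the occupied-seat lemma with the bookkeeping of the second step: making precise that a Pareto improvement over the SOSM outcome can only permute \emph{occupied} seats, and that indifferences (and school capacities greater than one) are exactly what force the qualifier ``up to null cliques.'' Once the reassignment is seen to be a seat-preserving permutation of $K$, the cycle decomposition and the persistence argument are routine, relying only on the fact, already established in Theorem \ref{T:paretocycle}, that following trading cliques yields Pareto improvements consistent with $G_{M^{*}}$.
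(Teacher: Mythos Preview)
Your approach is essentially the same as the paper's: define the set of students whose assignments differ under $M$ and $M^{*}$, observe that the reassignment is a permutation, decompose it into disjoint cycles, and feed the non-null cycles to TADAM. The paper simply asserts that $\pi_E$ is a permutation and that running TADAM on the resulting cliques yields $M$; you supply two pieces of scaffolding the paper leaves implicit, namely the occupied-seat lemma (which is what guarantees $\pi$ is well defined) and the persistence argument (which is why disjoint cycles can be processed sequentially without interfering). Your extra care with indifferent moves to vacant seats is also a refinement the paper does not spell out. So: same route, with you filling in a couple of gaps the paper glosses over.
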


\begin{proof} Let $E\subseteq I$ be the subset of students assigned to different schools under $M$ and $M^{*}$. For each element $i \in E$, denote by $\pi_E(i)$ the student $i^*$ with $M^*[i^*] = M[i]$; $\pi_E(i)$ is the student whose SOSM assignment is assigned to $i$ via the new matching. Note that $\pi_E$ is a permutation on $E$. 
[Recall again the fact that permutations are products of disjoint cycles (cf.\ proof of Theorem \ref{T:CoalitionEADAM}).]

Theorem \ref{T:paretocycle} implies that $M^{*}$ admits a trading clique; we want to make sure we will be aiming for $M$. To this goal, we use the following steps to identify which cliques should be used in TADAM: 
\begin{itemize}
\item \underline{Step 1}: Pick a student $i \in E$ and label her $i_{1,1}$. Then let $i_{1,2}$ be the student $\pi_E(i_{1,1})$ and more generally label $i_{1,j+1} = \pi_E(i_{1,j})$. This process will stop at some $j_1$ with $\pi_E(i_{1,j_1}) = i_{1,1}$ as $\pi_E$ is a finite permutation. Then $E_1 = ( i_{1,1}, i_{1,2},  \cdots, i_{1,j_1})$ is a clique. 
\item[] And in general,
\item \underline{Step k}, k $\ge$ 1: Pick a student $i \in E$ who has not yet been assigned to a cabal loop and label her $i_{k,1}$. If none exists then the algorithm stops. Otherwise, label $\pi_E(i_{k,1})$ as $i_{k,2}$ and more generally label $i_{k,j+1} = \pi_E(i_{k,j})$. This process stops at some $j_k$ with $\pi_E(i_{k,j_k}) = i_{k,1}$ as $\pi_E$ is finite. Then $E_k = (i_{k,1}, i_{k,2},  \cdots, i_{k,j_k}, i_{k,1})$ is a clique.
\end{itemize}
The process has to stop since $E$ is finite. Furthermore each student in $E$ shows up in exactly one step and hence in exactly one clique, because $\pi_E$ is invertible. Note that at least one of the cliques is not null (in other words at least one of the cliques is a trading clique) because for $M$ to Pareto dominate $M^*$, at least one student must strictly prefer her $M$-assignment to her $M^*$-assignment; this gives us an edge with weight $1$. Then to obtain $M$ from $M^*$ via TADAM we run TADAM using the trading cliques we have found above, and finally modify with the null cliques remaining. 
\end{proof}

Obviously, distinct Pareto efficient matchings are Pareto incomparable. At this point we might resort to another evaluative criterion. For instance we may wish to then consider the matchings with minimal preference index; this can reduce our option size.

The above proposition easily yields the following:
\begin{cor}  All efficient outcomes of EADAM and CIM can be found by TADAM.
\begin{proof}
This follows immediately from the previous proposition and noting that EADAM and CIM (Pareto) dominate SOSM. 
\end{proof}
\end{cor}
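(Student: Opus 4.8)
The plan is to reduce the corollary directly to the preceding proposition, which asserts that any matching $M$ that (Pareto) dominates the SOSM matching $M^*$ is realizable by TADAM up to null cliques. Given that, the only work is to confirm that the efficient outcomes of EADAM and CIM genuinely (Pareto) dominate $M^*$, and then to check that the ``up to null cliques'' caveat costs us nothing when the target matching is efficient.

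First I would verify the domination claim for CIM. By Theorem \ref{T:SCPCoalition}, the outcome $M'$ of a coalition $C = (K, A(K))$ satisfies $M'[i] = M^*[i]$ for every $i \notin K$ and $M'[i_k] = M^*[i_{k-1}]$ for every cabal member $i_k \in K$. The defining property of the cabal loop gives $M^*[i_{k-1}] \succ_{i_k} M^*[i_k]$, so each cabal member is \emph{strictly} better off while all remaining students are unaffected; hence $M'$ (Pareto) dominates $M^*$, and in particular so does any efficient CIM outcome. For EADAM I would invoke its defining property, used repeatedly above and established in \cite{Ke10}: the EADAM outcome (Pareto) dominates $M^*$. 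In the degenerate case where SOSM is already efficient, the EADAM and CIM outcomes coincide with $M^*$ itself, which is the Round-$0$ output of TADAM, so there is nothing to prove.

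With both domination facts in hand, the preceding proposition applies verbatim: each efficient outcome of EADAM or CIM, dominating $M^*$, is realizable by TADAM up to null cliques. There is no substantial obstacle here; the single delicate point is the ``up to null cliques'' language, and for an \emph{efficient} target it is harmless. Following a null clique merely permutes assignments among mutually indifferent students, leaving the preference reverence index and the entire (Pareto) equivalence class unchanged. Since TADAM halts precisely at Pareto efficient dominations of $M^*$, the efficient EADAM or CIM matching is reached exactly, or differs from a genuine TADAM output only by such indifferent swaps, which completes the argument.
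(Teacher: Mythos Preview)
Your proposal is correct and follows essentially the same approach as the paper: invoke the preceding proposition after noting that EADAM and CIM outcomes (Pareto) dominate the SOSM matching. Your treatment is in fact more careful than the paper's one-line proof, since you explicitly verify the domination for CIM via Theorem~\ref{T:SCPCoalition} and address the ``up to null cliques'' caveat for efficient targets, points the paper leaves implicit.
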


Recall that both EADAM and coalitional improvements provide us with one way to improve SOSM. However, TADAM can return all Pareto efficient matchings that dominate SOSM so that we can compare all choices and pick the most desirable matching. 

Note that since TADAM outcomes are always Pareto improvements on the SOSM baseline, TTC outcomes may not always be reachable via TADAM. This is because in some cases (e.g., $SCP_4$, $SCP_5$) the TTC outcome is Pareto incomparable to the SOSM outcome. Compare with \S\S\ref{SS:CoalitionsTTC}.

The absolute efficiency of TADAM may appeal to a utilitarian. However, this efficiency is achieved at the expense of stability. Clearly TADAM is not stable; just like TTC,
TADAM trades stability for efficiency. 
Obviously we need to make an effort to coordinate the tradeoff between stability and efficiency. 
In the school choice literature, ``fairness", ``stability", ``justified envy'', and ``no priority violation" are often used interchangeably. Here we propose a more nuanced notion of fairness.

Since TADAM starts with the SOSM outcome as input, we are starting at a point where student priorities are considered and respected. TADAM may then make  changes to the assignments which cause instability, manifesting itself in terms of justified envy. However, if a student's assigned school could not get any better under any stable mechanism, we surmise that his ``justified envy" for anybody's assignment should not be justified. To formalize this we make the
\begin{definition}
A matching is {\it reasonably fair} if there is no stable matching that can improve the assignment of any student. A mechanism is {\it reasonably fair} if it always outputs reasonably fair matchings. 
\end{definition}
Then the following is a direct consequence:
\begin{prop}
TADAM is a reasonably fair mechanism.
\end{prop}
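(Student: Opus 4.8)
TADAM is a reasonably fair mechanism.

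The plan is to unwind the two definitions and observe that the result is essentially immediate from how TADAM is initialized. Recall that a mechanism is \emph{reasonably fair} if it always outputs a matching $M$ such that no \emph{stable} matching improves the assignment of any student; that is, for every stable matching $N$ and every student $i$, it is not the case that $N[i] \succ_i M[i]$. So I must show that any TADAM output $M$ has this property.

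First I would recall the key structural feature of TADAM from its defining algorithm: Round $0$ produces the SOSM matching $M^{*}$, and every subsequent round follows a trading clique, which by the remark immediately after the algorithm (and by Theorem \ref{T:paretocycle}) yields a Pareto improvement. Hence any TADAM output $M$ satisfies $M[i] \succeq_i M^{*}[i]$ for every student $i$. This is the one fact I need to carry forward: \emph{every student weakly prefers her TADAM assignment to her SOSM assignment.}

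Next I would invoke the defining optimality property of SOSM stated in \S\S\ref{SS:Background}: the SOSM outcome $M^{*}$ (weakly) Pareto dominates every stable matching. Concretely, for any stable matching $N$ and any student $i$, we have $M^{*}[i] \succeq_i N[i]$. Combining this with the previous step by transitivity of the weak preference order $\succeq_i$ gives $M[i] \succeq_i M^{*}[i] \succeq_i N[i]$, so $M[i] \succeq_i N[i]$ for every student $i$ and every stable $N$. In particular there can be no stable matching $N$ and student $i$ with $N[i] \succ_i M[i]$, since that would contradict $M[i] \succeq_i N[i]$ (as $\succ_i$ is the strict part of the weak order $\succeq_i$). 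This is exactly the condition for $M$ to be reasonably fair, so since $M$ was an arbitrary TADAM output, TADAM is a reasonably fair mechanism.

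The argument is a short transitivity chain, so I do not anticipate a genuine obstacle; the only point requiring care is making the student-optimality of SOSM precise as a per-student weak domination over \emph{all} stable matchings (rather than merely Pareto domination in the two-sided sense), and confirming that this holds in the school-choice formulation with the stability notion defined in \S\S\ref{SS:Notation}. Once that is pinned down, the conclusion follows directly.
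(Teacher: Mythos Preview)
Your proof is correct and matches the paper's approach: the paper simply records the proposition as ``a direct consequence'' of the definition and the fact that TADAM outputs Pareto improvements of the SOSM matching, which in turn Pareto dominates every stable matching. You have spelled out precisely this transitivity chain.
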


Finally note that indifferences in student preferences are seamlessly incorporated into the cliques model (via the weights on our graph). This makes our model a versatile theoretical tool for further investigations in the SCP. Consider for instance the simple SCP below:
\[ SCP_6 : \qquad \begin{array}{ccc}
i_{1}:s_{1} = s_{2} \succ s_{3} & \qquad & s_{1}: i_{1} \succ i_{3} \succ i_{2} \\
i_{2}: s_{1}  \succ  s_{2} \succ s_{3} & \qquad & s_{2}:i_{2} \succ i_{1} \succ i_{3} \\
i_{3}: s_{1} \succ s_{2} \succ s_{3} & \qquad &  s_{3}:i_{2} \succ i_{1} \succ i_{3}
\end{array}\]
where we denote indifference by $=$.
When we run the SOSM we need to break the tie in student $i_1$'s preferences, so in one such tie-breaking step, the resulting problem equals $SCP_1$:
\[ SCP_1 : \qquad \begin{array}{ccc}
i_{1}:s_{2} \succ s_{1} \succ s_{3} & \qquad & s_{1}: i_{1} \succ i_{3} \succ i_{2} \\
i_{2}: s_{1}  \succ  s_{2} \succ s_{3} & \qquad & s_{2}:i_{2} \succ i_{1} \succ i_{3} \\
i_{3}: s_{1} \succ s_{2} \succ s_{3} & \qquad &  s_{3}:i_{2} \succ i_{1} \succ i_{3}
\end{array}\]
Then the SOSM outcome is
\[ M_S^{SCP_6}  = \begin{pmatrix}i_{1}& i_{2}& i_{3}\\ s_{1}&s_{2}&s_{3}\end{pmatrix},\]
TADAM allows us to create a trading clique between $i_1$ and $i_2$, resulting in a more optimal outcome:
\[ M_T^{SCP_6}  = \begin{pmatrix}i_{1}& i_{2}& i_{3}\\ s_{2}&s_{1}&s_{3}\end{pmatrix}.\]
This outcome has lower preference index ($2$ as opposed to $3$) and is efficient. 

Although a considerable amount of research has been done regarding indifferences within school priority classes, indifference in student preferences has not been closely considered. As far as we know, our work here provides the first cycle improvement model applied to the SCP that explicitly takes into account student preference indifferences. We intend to follow this path of inquiry further in future work. For indifferences in the context of the stable marriage problem, see \cite{Ir94}, \cite{Man02}.


\section{Conclusion}
\label{S:Conclusion}

In this paper, we introduce and investigate the properties of coalitions and cliques, two notions that can be incorporated into a school choice mechanism to improve the efficiency of SOSM. While ``coalitions" and ``cliques" might seem semantically indistinguishable, we believe the two perspectives provide different insights about cyclic improvements.

Coalitions present an opportunity for ex-ante efficiency gains upon SOSM. By effectively ignoring irrelevant preferences under SOSM, we allow for maximum gains for students under SOSM. Since matching students under SOSM in the presence of interruptors hinders efficiency, we effectively prevent students' preferences from unnecessarily conflicting with each other. 

Conversely, the notion of trading cliques presents an opportunity for ex-post efficiency gains upon SOSM. 
More specifically, TADAM realizes all possible efficiency gains that allow mutually beneficial trade. Since matching students under SOSM guarantees students the best matching they could have received under a stable mechanism,  we attempt to improve upon this baseline to provide students with an optimal matching. Fortunately, TADAM allows the policy maker to decide \emph{which} improvement is most desirable, given their socially relevant circumstances.

Our work may also be viewed as a fresh examination of three well-known and widely used school choice mechanisms (SOSM, TTC, and EADAM) each of which has well-understood strengths and weaknesses.  Our view throughout is that strict adherence to stability at the cost of student preference optimization is less than ideal and can be overcome by considering new criteria for evaluation and/or different justifications for sacrificing stability. Our introduction of the notion of ``reasonably fair'' captures this alternate focus on improving outcomes for students without bowing to the pressures of perceived unfairness.  The double meaning of reasonableness as ``somewhat'' as well as ``what a reasonable person would accept'' is especially apropos. The constructions here yield opportunities to improve upon these mechanisms while justifying resulting priority violations in new ways.

The first improvement presented here involves the creation of cheating coalitions to improve on SOSM, analogous to the use of such cheating coalitions in the stable marriage problem.  We show that using coalitions, we can obtain the same outcome as EADAM, with full consent.  Because the creation of cheating coalitions requires complete knowledge of all preference profiles, it suggests the idea that school districts explore all possible outcomes of coalition cheating in order to establish the best overall matching in terms of student preferences as well as school priorities.  This sets up the notion of school districts as ``co-conspirators'' with the additional position of benevolence in the sense of wanting the best outcome for all students in contrast to the assumed selfish goal of individual students.  The priority violations that arise as a result of EADAM (and other coalition cheating outcomes) are better justified in this context as the overall process seeking the best outcome for most.  Students would not have to be asked to waive priorities because the investigation of outcomes resulting from cheating coalitions would be considered part of the overall mechanism.  Since the cheating coalitions (as with EADAM) do not result in any students receiving a worse matching than they would have in any case, the objections should be minimized.

Our second approach to improving upon SOSM involves the introduction of trading cliques.  This process produces all possible Pareto efficient matchings that Pareto dominate SOSM.  The improvement cycles used do not result in any student being matched to a lower preference school and do not require students to waive their (perceived) priorities.  Again, in this context, the examination of all outcomes that result from applying trading cliques to the outcomes of SOSM would be considered as part of the mechanism, applied by the district (Mechanism Designer), and thus the priority violations would be more fully justified.  While there already exists a mechanism (EADAM\ \cite{Ke10}) which finds \emph{one} Pareto efficient, Pareto domination of the popular deferred acceptance mechanism (SOSM \cite{AbSo03}), TADAM  finds \emph{all} possible Pareto dominations of SOSM. This mechanism affords policy makers the luxury of comparing all available Pareto improvements of SOSM. In addition, recall that TADAM produces, among its matchings, the EADAM matching (with full consent).Ê Because TADAM is easier to run and explain than EADAM, we contend that it is a preferable way to obtain that matching.Ê Moreover, families do not have to waive priorities in the context of TADAM.Ê

Clearly our two modifications work by Pareto improving the baseline outcome of SOSM.  Since no student is ever matched to a lower ranked school than SOSM, the objections to the followup process should be minimized.  Starting with SOSM as a basis is useful because it is a mechanism that is currently in use. Thus these improvements can have genuine practical implications. We can justify the priority violations that result from coalition cheating and trading cliques by showing that the new assignments (Pareto) dominate the SOSM assignments.  Because many of the current school priorities in place are meant to create some certainty/security for families, once those have been taken into account in the initial assignment, and since we can demonstrate that no families are made worse off, neither schools nor families should have a reason to object. 

As a final note we point out that in fact the two notions introduced in this paper are related. More specifically given a coalition $C = (K,A(K))$ in the notation of \S\S\ref{SS:CoalitionImplementation}, we can always construct a sequence of trading cliques that under TADAM yields the same outcome. In other words coalitional outcomes can always be obtained via TADAM as well. Going the other way is also doable in the case of strict preference profiles: any clique in such a context corresponds to a cabal cycle and the accomplices may be determined afterwards by looking at the resulting priority violations. Nonetheless, even though the two threads of this work lead us to almost equivalent end points, we believe that their separate (but connected) treatment may encourage a more nuanced discussion.


\bibliography{SchoolChoiceBibliography}

\appendix

\section{Proof of Theorem \ref{T:StrategyKesten}}

\subsection{Background and notation} In the following, we describe, in our notation, the precise formal setting of Kesten's Apprendices E-F \cite{Ke10}, which in turn follow \cite{Eh08} and \cite{RR99}.

Recall from \S\S\ref{SS:Notation} that we denote a set consisting of preference profiles for each student in $I$ by ${\bf P} = \{P_i : i \in I\}$ and 
a set consisting of priority structures for each school in $S$ by ${\bf \Pi} = \{\Pi_s : s \in S\}$.
We also denoted the space of all sets ${\bf P}$, and the space of all sets ${\bf \Pi}$, by $\mathfrak{P}$ and $\mathfrak{\Pi}$, respectively.

In the following we will also need to introduce the notation ${\bf P}_i$, which stands for the class of all possible preference listings for student $i$. Set 
\[ {\bf X}_{-i} \defequal \mathfrak{\Pi}  \times \{ {\bf P_{i^{\prime}}}\}_{i^{\prime} \neq i}. \]
Following Kesten we define a \textbf{random school choice problem} (RSCP) to be a probability distribution $\hat{\bf P}_{-i}$ over ${\bf X}_{-i}$, which is intended to denote student $i$'s belief about the preference lists submitted by the rest of the students together with the priority structures given for all the schools. Analogously we define a \textbf{random matching} $\hat{{M}}$ to be a probability distribution over the set $\mathfrak{M}$ of all matchings. Given a matching mechanism $\mathcal{M}$ and a SCP $({P}_i, {P}_{-i})$, where $P_{-i} \in {\bf X}_{-i}$,
$\mathcal{M}({P}_i, {P}_{-i})$ will stand for the matching selected by $\mathcal{M}$ for this problem.

Now given a mechanism $\mathcal{M}$, and a student $i$ with preference $P_i$, each RSCP $\hat{\bf P}_{-i}$ induces a random matching $\mathcal{M}(P_i,\hat{\bf P}_{-i})$ in the following manner: For $M \in \mathfrak{M}$, set the probability $Pr(\mathcal{M}(P_i,\hat{\bf P}_{-i}) = M)$ that the mechanism outputs matching $M$ for the setup at hand equal to the probability that $\hat{\bf P}_{-i} = P_{-i}$ and $\mathcal{M}(P_i,P_{-i}) = M$. Finally define $\mathcal{M}(P_i,\hat{\bf P}_{-i})[i]$ to be the distribution that this random matching induces over student $i$'s set of placements.

We will say that, given a student $i$, her true preference list $P_i$, and two strategies (declared preference lists) $P^{\prime}_i, P^{\prime \prime}_i$ for $i$, strategy $P^{\prime}_i$ \textbf{stochastically dominates} $P^{\prime\prime}_i$ if the probability distribution induced on the placements of student $i$ when she declares $P^{\prime}_i$ stochastically dominates the probability distribution induced when she declares $P^{\prime\prime}_i$, where we base the comparison on her true preference list $P_i$. More precisely, given student $i \in I$, preference lists $P_i, P^{\prime}_i, P^{\prime \prime}_i \in {\bf P}_i$, and a RSCP $\hat{\bf P}_{-i}$, strategy $P^{\prime}_i$ \textbf{stochastically $P_i$-dominates} strategy $P^{\prime\prime}_i$ if for all $s \in S$, the probability that $i$ will be assigned by the mechanism to a school that (according to her true preference list $P_i$) she prefers over $s$ when she submits the preference list $P^{\prime}_i$ is greater than or equal to the probability of the same happening when she submits the list $P^{\prime\prime}_i$.

So let us now be given a SCP of the form $({\bf P}, {\bf \Pi}) = (\{P_i : i \in I\}, \{\Pi_s, s \in S\}$. 
Let the set of schools $S$ be partitioned into categories of perceived quality (``\emph{communal perceived quality classes}''): 
\[ S =  S_1 \cup  S_2 \cup \cdots \cup S_m, \qquad \textmd{ such that } \qquad S_i \cap S_j = \emptyset \textmd{ if } i \neq j,\] 
where all students prefer any school in $S_k$ to any school in $S_l$, for any $k,l \in \{1,\cdots,m\}$ with $k < l$.

\subsection{Symmetry, anonymity, and positive association}
If a student cannot distinguish between two schools $s, s^{\prime}$ in terms of how other students rank them, then we say that she has \textbf{symmetric} information about the two schools. In the setting above where students all agree on the perceived quality categories of schools, the additional assumption of symmetry is quite reasonable, especially in the context of larger school districts.\footnote{Following Kesten's Footnote 27, we add another condition: Two schools assumed to be symmetric also have similar capacities. Variances in capacity may cause strategizing among groups of students, and may break the symmetry.} So we assume that each student's information be symmetric for any two schools in the same perceived quality category. 
To show that for any student the strategy of truth telling stochastically dominates any other strategy when other students behave truthfully, we refer to Ehlers' two conditions: \textit{anonymity} and \textit{positive association} \cite{Eh08}. 

To define these three terms precisely, we need some more notation first. 

Given a matching ${M} \in \mathfrak{M}$, and $s, s^{\prime} \in S$, denote by ${M}^{s \leftrightarrow s^{\prime}}$ denote the matching that switches the assignments of the schools $s$ and $s^{\prime}$. In other words, 
\[ {M}^{s \leftrightarrow s^{\prime}}[i] = \begin{cases}
{M}[i] & \textmd{ if } {M}^{s \leftrightarrow s^{\prime}}[i] \neq s \textmd{ or } s^{\prime} \\
s^{\prime} & \textmd{ if } {M}[i] = s \\
s & \textmd{ if } {M}[i] = s^{\prime}. 
\end{cases}\]
Similarly we can define $P_i^{s \leftrightarrow s^{\prime}}$ to be the preference list for $i$ obtained from $P_i$ by switching the order of $s$ and $s^{\prime}$ while leaving all the other rankings fixed. In this same scenario, $P_{-i}^{s \leftrightarrow s^{\prime}} \in {\bf X}_{-i}$ will denote the complementary profile for $i$, where each student $i^{\prime} \neq i$ switches the positions of $s$ and $s^{\prime}$ in his preference list, and the two schools $s$ and $s^{\prime}$ switch with one another their capacity and their priorities: $\Pi_s$ becomes the new priority for $s^{\prime}$ while $\Pi_{s^{\prime}}$ is the new priority structure for $s$. 

Finally we can make the following 
\begin{definition}
For student $i \in I$ and schools $s, s^{\prime} \in S$, $i$'s information on $s$ and $s^{\prime}$ is \textbf{symmetric} if $P_{-i}$ and $P_{-i}^{s \leftrightarrow s^{\prime}}$ are equally probable; i.e., $Pr(\hat{\bf P}_{-i}= P_{-i}) = Pr(\hat{\bf P}_{-i} = P_{-i}^{s \leftrightarrow s^{\prime}})$. $i$ has \textbf{completely symmetric} information if $i$ has symmetric information on $s$ and $s^{\prime}$ for any pair $s,s^{\prime} \in S$.
\end{definition}

\begin{definition}
A mechanism $\mathcal{M}$ is said to satisfy \textbf{anonymity} if for any student $i\in I$, $P_i \in {\bf P}_i$, ${P}_{-i} \in {\bf X}_{-i}$, and $s, s^{\prime} \in S$, whenever $\mathcal{M}(P_i,P_{-i}) = M$, we have $\mathcal{M}(P_i^{s \leftrightarrow s^{\prime}},P_{-i}^{s \leftrightarrow s^{\prime}}) = M^{s \leftrightarrow s^{\prime}}$.
\end{definition}

\begin{definition}
A mechanism $\mathcal{M}$ is said to satisfy \textbf{positive association} if for any student $i\in I$, $P_i \in {\bf P}_i$, ${P}_{-i} \in {\bf X}_{-i}$, and $s, s^{\prime} \in S$, whenever $\mathcal{M}(P_i,P_{-i})[i] = s$ and $i$ prefers $s^{\prime}$ to $s$ under the preference list $P_i$, we have $\mathcal{M}(P_i^{s \leftrightarrow s^{\prime}},P_{-i})[i] = s$.
\end{definition}

In intuitive language symmetric information means that student $i$ cannot see any difference between the two schools with respect to their priorities and the remaining student preferences. Anonymity basically says that the mechanism treats schools equally. Positive association implies that if a student decides to report a higher ranking for a school he is to be placed into in one particular problem, the mechanism will not modify his assignment. 

Ehlers \cite{Eh08} proves that DA/SOSM mechanism satisfies both conditions of anonymity and positive association. Kesten \cite{Ke10} extends these ideas to show that, under the assumption of symmetric information, EADAM also satisfies both conditions.
It is clear that TADAM also satisfies the anonymity condition. In the next section we will see that it also displays the positive association property. 
Theorem \ref{T:StrategyKesten} will then follow as a simple application of Ehlers' Theorem 3.1. Below we translate the latter result into the school choice context and present this as:
\begin{theorem}[Ehlers \cite{Eh08}]
Let $\mathcal{M} \in \mathfrak{M}$ be an anonymous mechanism with positive association.
\begin{enumerate}
\item[(a)] If a student has symmetric information for schools $s, s^{\prime}$, any strategy that reverses the true ranking of $s$ and $s^{\prime}$ is stochastically dominated by a strategy that preserves the true ranking of $s$ and $s^{\prime}$; and
\item[(b)] If a student has completely symmetric information, any strategy that changes her true ranking of the schools is stochastically dominated by a strategy that preserves their true ranking.
\end{enumerate}
\end{theorem}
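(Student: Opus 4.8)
The plan is to prove this by a pairing (coupling) argument over the symmetric orbits of realized complementary profiles, following the Roth--Rothblum strategy adapted by Ehlers. Fix the true preference list $P_i$ and a pair of schools $s,s'$ with $s \succ_{P_i} s'$. Write $P_i' = P_i$ for the strategy that preserves the true order of $s$ and $s'$, and $P_i'' = (P_i')^{s \leftrightarrow s'}$ for the reversing strategy. Since $i$ has symmetric information on $s$ and $s'$, the events $\hat{\bf P}_{-i} = P_{-i}$ and $\hat{\bf P}_{-i} = P_{-i}^{s \leftrightarrow s'}$ are equiprobable, so every placement probability can be computed by summing over the two-element orbits $\{P_{-i}, P_{-i}^{s \leftrightarrow s'}\}$. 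The entire argument then reduces to a deterministic comparison on a single such orbit.

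First I would use anonymity to express the outcomes on the swapped profile in terms of the outcomes on the original profile. Setting $a = \mathcal{M}(P_i', P_{-i})[i]$ and $b = \mathcal{M}(P_i'', P_{-i})[i]$, anonymity yields $\mathcal{M}(P_i'', P_{-i}^{s \leftrightarrow s'})[i] = \sigma(a)$ and $\mathcal{M}(P_i', P_{-i}^{s \leftrightarrow s'})[i] = \sigma(b)$, where $\sigma$ is the transposition swapping $s$ and $s'$ and fixing every other school. Hence on the orbit, submitting the truthful $P_i'$ produces the placements $\{a, \sigma(b)\}$ while submitting the reversal $P_i''$ produces $\{b, \sigma(a)\}$, each with relative weight $\tfrac12$.

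Next I would invoke positive association to constrain the pair $(a,b)$. Applying it to $P_i''$ (under which $i$ prefers $s'$ to $s$) shows that $b = s$ forces $a = s$; applying it to $P_i'$ (under which $i$ prefers $s$ to $s'$, with the assigned school being the less preferred $s'$) shows that $a = s'$ forces $b = s'$. The core step is to verify stochastic $P_i$-dominance threshold by threshold: for each school $t$ and upper contour set $U = \{u : u \succ_{P_i} t\}$, I must check
\[ \mathbf{1}[a \in U] + \mathbf{1}[\sigma(b) \in U] \ge \mathbf{1}[b \in U] + \mathbf{1}[\sigma(a) \in U]. \]
Because $\sigma$ only moves placements between $s$ and $s'$, and $s \succ_{P_i} s'$ guarantees $\mathbf{1}[s \in U] \ge \mathbf{1}[s' \in U]$, the only configurations that could make the left side smaller are $a = s'$ and $b = s$. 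These are precisely the configurations in which positive association forces a balancing placement ($b = s'$, resp.\ $a = s$), and a short case analysis then confirms the inequality in every case; summing over all orbits establishes part (a).

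For part (b) I would reduce to part (a) by a bubble-sort decomposition. Given any strategy that disagrees with the true ranking, I would repeatedly transpose an adjacent pair of schools whose current relative order is inverted with respect to $P_i$; each such transposition is exactly a swap of the kind treated in part (a), and complete symmetry supplies the symmetric-information hypothesis for every pair. Each step produces a strategy that stochastically $P_i$-dominates its predecessor, and since stochastic dominance is transitive and bubble sort terminates at the truthful list, chaining the dominations gives the claim. The main obstacle I anticipate is the case analysis in part (a): one must invoke positive association with the correct orientation (the assigned school being the less preferred of the pair) so that it neutralizes exactly the two problematic configurations rather than some other combination; getting this bookkeeping right is the crux of the proof.
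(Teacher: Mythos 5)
The paper never proves this statement: it is quoted verbatim (translated into school-choice language) from Ehlers \cite{Eh08}, and the authors use it as a black box to deduce their Theorem on truth-telling under TADAM. So there is no in-paper proof to compare against; what you have written is a genuine proof of the imported result, and it is essentially the standard Roth--Rothblum/Ehlers coupling argument. Your mechanics check out: on each orbit $\{P_{-i}, P_{-i}^{s \leftrightarrow s'}\}$, anonymity gives the placement pairs $\{a,\sigma(b)\}$ versus $\{b,\sigma(a)\}$; since $s \succ_{P_i} s'$, the only upper contour sets $U$ where the transposition $\sigma$ matters are those with $s \in U$, $s' \notin U$; the only sign-violating configurations are ($a=s'$ without $b=s'$) and ($b=s$ without $a=s$); and your two applications of positive association ($b=s \Rightarrow a=s$ via $P_i''$, and $a=s' \Rightarrow b=s'$ via $P_i'$) kill exactly those. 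The degenerate orbits $P_{-i}=P_{-i}^{s\leftrightarrow s'}$ are covered by the same inequality with $b=\sigma(a)$.

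One presentational wrinkle you should fix: you set $P_i' = P_i$, so as written part (a) only shows that the truth dominates its own $s \leftrightarrow s'$ swap. The theorem asserts that \emph{any} reversing strategy $Q$ is dominated by a preserving one, and your bubble-sort reduction in part (b) needs exactly this general form, since the intermediate lists being swapped are not the truth. Fortunately nothing in your coupling uses $P_i' = P_i$: the argument only needs that $P_i'$ ranks $s$ above $s'$, that $P_i'' = (P_i')^{s \leftrightarrow s'}$, and that the upper contour sets are taken with respect to the true list $P_i$. Restating part (a) as ``for any $Q$ ranking $s'$ above $s$, the strategy $Q^{s \leftrightarrow s'}$ stochastically $P_i$-dominates $Q$'' makes the chaining in (b) legitimate, with transitivity of stochastic dominance and termination of bubble sort at $P_i$ finishing the proof exactly as you describe.
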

As direct corollaries of the proof [cf.\ Kesten \cite[Prop.A.1]{Ke10}], we obtain two results:
\begin{cor}
Let $\mathcal{M} \in \mathfrak{M}$ be an anonymous mechanism with positive association.
If student $i$ has symmetric information for all schools in $S^{\prime} \subset S$, any strategy that changes her true ranking of schools in $S^{\prime}$ and reflects her true preferences for $s \in S \backslash S^{\prime}$ is stochastically dominated by a strategy that preserves the true rankings for all schools in $S$.
\end{cor}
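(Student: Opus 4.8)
The plan is to deduce this corollary from part (a) of the preceding theorem of Ehlers by an ``unsorting'' (bubble-sort) argument, exactly parallel to the way part (b) follows from part (a), but localized to the subset $S'$. Write $P_i$ for $i$'s true preference list and let $P_i'$ be any strategy of the type described: it reorders the schools of $S'$ but agrees with $P_i$ on the relative ranking of every pair of schools that is not entirely contained in $S'$ (this is precisely what ``reflects her true preferences for $s \in S\setminus S'$'' should mean, since every pair involving a school of $S\setminus S'$ is then ranked truthfully). I would show that the truthful strategy $P_i$ stochastically $P_i$-dominates $P_i'$, as $P_i$ is exactly a strategy that ``preserves the true rankings for all schools in $S$.''

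The key structural observation I would record first is that, because $P_i'$ preserves the true order of every pair not contained in $S'$, every inversion of $P_i'$ relative to $P_i$ — every consecutive pair ranked by $P_i'$ in the reverse of their true order — consists of two schools both lying in $S'$. Indeed, a school of $S\setminus S'$ keeps its true position relative to every other school, so the schools of $S'$ can only be permuted within the blocks of consecutive $S'$-positions left between them; any out-of-order consecutive pair is therefore a pair of $S'$ schools that are genuinely adjacent in the list, with nothing between them. This is the only place the hypothesis ``truthful outside $S'$'' is used, and it is what makes the symmetric-information assumption of part (a) available at every step.

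With this in hand I would induct on the number $N$ of inversions of $P_i'$ relative to $P_i$. If $N=0$ then $P_i'=P_i$ and there is nothing to prove. If $N>0$, pick an adjacent inverted pair $s,s'\in S'$ and set $Q=(P_i')^{s\leftrightarrow s'}$; then $Q$ again reorders only schools of $S'$, has $N-1$ inversions, and since $i$ has symmetric information on all of $S'$ (hence on $s,s'$) while $P_i'$ reverses their true ranking and $Q$ restores it, part (a) of the theorem — whose dominating strategy is exactly the one switching $s$ and $s'$ back — gives that $Q$ stochastically $P_i$-dominates $P_i'$. By the inductive hypothesis $P_i$ stochastically $P_i$-dominates $Q$, and transitivity of stochastic $P_i$-dominance composes the two steps to yield domination of $P_i'$ by $P_i$.

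The main obstacle is the structural observation of the second paragraph: one must be sure that at each stage the adjacent inverted pair selected lies entirely inside $S'$, for otherwise part (a) would not apply and the chain of one-step dominations would break. Everything else is routine bookkeeping — that switching a consecutive pair strictly lowers the inversion count, that the intermediate lists $Q$ remain of the admissible ``truthful outside $S'$'' form, and that stochastic $P_i$-dominance is transitive so that the one-step dominations compose into the desired global statement.
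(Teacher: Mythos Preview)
Your proposal is correct and follows essentially the same approach the paper has in mind: the paper does not spell out a proof but records this as a ``direct corollary of the proof'' of Ehlers' theorem (citing Kesten), and the bubble-sort reduction you give --- repeatedly applying part (a) to an adjacent inverted pair inside $S'$ and composing via transitivity --- is exactly how part (b) is obtained from part (a) in that framework, localized to $S'$. Your key structural observation, that schools of $S\setminus S'$ retain their absolute positions and hence every adjacent inversion lies entirely in $S'$, is the right way to justify that part (a) is applicable at each step.
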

\begin{cor}
Let $\mathcal{M} \in \mathfrak{M}$ be an anonymous mechanism with positive association.
If student $i$ has true preference $P_i$ and information $\hat{\bf P}_{-i}$ under the conditions of Theorem \ref{T:StrategyKesten}, strategy $P_i$ stochastically dominates any other strategy ranking schools in $S_k$ above schools in $S_l$ for all $k < l$. 
\end{cor}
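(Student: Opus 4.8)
The plan is to reduce the corollary to a sequence of single transpositions of symmetric schools and then invoke part (a) of the Ehlers theorem stated above together with transitivity of stochastic dominance. First I would observe that the true preference list $P_i$ automatically respects the category ordering: since every student prefers any school of $S_k$ to any school of $S_l$ whenever $k<l$, the list $P_i$ must rank all of $S_k$ above all of $S_l$. By hypothesis the competing strategy $P_i'$ also ranks $S_k$ above $S_l$ for all $k<l$. Consequently $P_i$ and $P_i'$ agree on every comparison between schools lying in different categories, and can disagree only in the relative order they assign to schools within a single category $S_k$. Because student $i$ has symmetric information on any two schools of the same category, every pair on which $P_i$ and $P_i'$ differ is a symmetric pair.

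Next I would transform $P_i'$ into $P_i$ by a bubble sort performed independently inside each category block. Writing both lists as the concatenation of the blocks $S_1, S_2, \dots, S_m$ (in that order, since both respect the category ordering), I would repeatedly swap two \emph{adjacent} schools of a common block whenever their current order disagrees with their order in $P_i$, correcting one inversion at a time. Each such swap is between two schools of the same category, hence a symmetric pair, and it replaces a strategy in which those two schools reverse their true ranking by the strategy obtained from it via the swap $s \leftrightarrow s'$ that restores their true ranking while leaving everything else fixed; no swap ever crosses a category boundary, since the blocks are already in the correct global order. This produces a finite chain of strategies $P_i' = Q_0, Q_1, \dots, Q_N = P_i$ in which consecutive terms differ by exactly one such symmetric adjacent transposition taken toward the true order.

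Finally I would apply part (a) of the Ehlers theorem to each step of the chain: since $Q_{t-1}$ reverses the true ranking of a symmetric pair $s, s'$ and $Q_t = Q_{t-1}^{\,s \leftrightarrow s'}$ preserves it, strategy $Q_t$ stochastically $P_i$-dominates $Q_{t-1}$. As stochastic dominance is transitive, chaining these $N$ comparisons yields that $Q_N = P_i$ stochastically dominates $Q_0 = P_i'$, which is the assertion. (The special case of a deviation confined to a single category recovers the preceding corollary, and this statement is the key ingredient in establishing Theorem~\ref{T:StrategyKesten}.)

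The step I expect to require the most care is the very use of part (a) inside the chain: Ehlers' statement must be read as a \emph{pairwise} comparison that holds for an arbitrary host strategy --- that is, swapping a symmetric pair toward its true order weakly improves the induced placement distribution no matter how the remaining schools are ordered --- so that it still applies to the intermediate strategies $Q_t$, which typically continue to misrank schools in other categories. Verifying that the symmetry hypothesis (equal probability of $P_{-i}$ and $P_{-i}^{s \leftrightarrow s'}$) together with anonymity and positive association indeed yields this host-independent pairwise domination, and that the resulting comparisons compose transitively, is the technical heart of the argument; the combinatorial bookkeeping of the bubble sort is then routine.
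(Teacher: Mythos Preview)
Your proposal is correct and matches the intended argument. The paper does not supply its own proof of this corollary; it merely states that it follows ``as a direct corollary of the proof'' of Ehlers' theorem, citing Kesten's Proposition~A.1. Your bubble-sort reduction within each perceived-quality block, chaining pairwise symmetric swaps via part~(a) and transitivity of stochastic dominance, is exactly the standard technique underlying those cited results, so you have simply spelled out what the paper leaves to the reference. Your caveat about reading part~(a) as a host-independent pairwise comparison (i.e., $Q^{s\leftrightarrow s'}$ dominates $Q$ whenever $Q$ reverses the true order of a symmetric pair $s,s'$, regardless of how $Q$ ranks the remaining schools) is well placed and is indeed how the Ehlers/Kesten argument proceeds.
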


\subsection{TADAM and positive association} In order to show that TADAM satisfies positive association, we will first prove:
\begin{prop}
In the setup of Theorem \ref{T:StrategyKesten}, any trading clique formed at any stage of TADAM consists only of schools within the same class of perceived quality.
\end{prop}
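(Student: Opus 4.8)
The plan is to exploit the cyclic structure of a trading clique together with the hypothesis of Theorem~\ref{T:StrategyKesten} that all students agree on the relative ranking of the perceived-quality classes. For a school $s$ write $c(s)$ for the index with $s \in S_{c(s)}$, so that a smaller value of $c$ denotes a more desirable class. The one structural fact I would use is that the between-class part of every (true) student preference is fixed: whenever two schools lie in different classes, every student strictly prefers the one with the smaller class index, since by hypothesis each student prefers any school in $S_k$ to any school in $S_l$ whenever $k < l$.

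First I would recall from Definition~\ref{D:DirectedGraph} how edges encode preferences. If $(i_1, i_2, \dots, i_k)$ is a trading clique in the graph $G_M$ of the current matching $M = M_{t-1}$, then each edge $e_{i_s, i_{s+1}}$ (indices modulo $k$) records the weak preference $M[i_{s+1}] \succeq_{i_s} M[i_s]$. I would then translate this into a class inequality $c(M[i_{s+1}]) \le c(M[i_s])$: if the two assignments sit in different classes, the weak preference can hold only when $M[i_{s+1}]$ occupies the strictly better class, while if they sit in the same class the indices are equal. This step is insensitive to the weight $w(e_{i_s,i_{s+1}})$, so the single strict edge required by the definition of a trading clique causes no difficulty.

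Chaining these inequalities around the closed cycle then gives
\[ c(M[i_1]) \ge c(M[i_2]) \ge \cdots \ge c(M[i_k]) \ge c(M[i_1]), \]
so every inequality collapses to an equality and $M[i_1], \dots, M[i_k]$ all lie in a single class $S_j$, which is the desired conclusion. I would close by remarking that the argument is indifferent to which matching $M_{t-1}$ is current, since it uses only that the clique is a cycle in $G_{M_{t-1}}$; hence it holds at every stage of TADAM. As the theorem assumes all students report truthfully, the preferences and class structure used here are the common true ones from the statement. I do not expect a genuine obstacle: the only point needing care is the routine translation of a weak-preference edge into a non-strict inequality on class indices, together with the observation that this translation does not depend on edge weights.
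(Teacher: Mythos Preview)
Your proof is correct and follows essentially the same approach as the paper's: both translate each clique edge into a weak inequality between the perceived-quality class indices of the endpoints' current assignments, then use the cyclic closure to force all inequalities to equalities. Your treatment is slightly cleaner in that you observe explicitly that the edge-to-inequality translation is insensitive to the edge weight, whereas the paper begins by assuming WLOG that the first edge is strict and then implicitly uses only the weak version for the remaining edges.
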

\begin{proof}
Let $(i_1, i_2, \cdots, i_n)$ be a trading clique formed at stage $t$ of TADAM. Without loss of generality assume that $w(e_{i_1,i_2}) = 1$, $M_{t-1}[i_1] = s_1$, $M_{t-1}[i_2] = s_2$, $s_1 \in S_{k_1}$. If $s_2 \in S_{k_2}$ for some $k_2$, then we must have $k_2 \le k_1$ because $s_2 \succ_{i_1} s_1$. 
Similarly we can show that, for all $r \in \{2, \cdots , n\}$, the communal perceived quality of school $M_{t-1}[i_r]$ must be ranked higher than or the same as the communal perceived quality of school $M_{t-1}[i_{r-1}]$. More precisely, if we define $k_r \in \N$ for $r \in \{2, \cdots , n\}$ so that $M_{t-1}[i_r] \in S_{k_r}$, we must have $k_r \le k_{r-1}$ for all $r$. But this will carry over to the end of the loop, so we also have $k_1 \le k_n$.  
As $\N$ is well-ordered, 
we need to have $k_1 = k_2 = \cdots = k_n$ to avoid inconsistency. This completes the proof.
\end{proof}

A direct corollary adapts Kesten's Lemma A.3 to TADAM:
\begin{cor}In the setup of Theorem \ref{T:StrategyKesten}, TADAM places each student to a school whose communal perceived quality class is the same as her DA/SOSM assignment.  
\end{cor}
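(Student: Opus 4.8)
The final statement is the corollary: in the setup of Theorem \ref{T:StrategyKesten}, TADAM places each student into a school whose communal perceived quality class is the same as her DA/SOSM assignment. I would derive this as an immediate consequence of the immediately preceding proposition, which asserts that every trading clique formed at any stage of TADAM consists only of schools within a single perceived quality class.

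**Approach.** The plan is to track what each trading clique does to the quality class of a student's assignment across rounds of TADAM. First I would recall the structure of the algorithm: TADAM begins (Round 0) by running SOSM, producing $M_0$, and each subsequent round modifies the current matching only by executing a single trading clique, reassigning each student $i$ in the clique to the school previously held by the next student in the clique. The key input is the preceding proposition: every such clique $(i_1, \ldots, i_n)$ satisfies $M_{t-1}[i_1], \ldots, M_{t-1}[i_n]$ all lying in one common class $S_k$. Since a trade merely permutes these schools among the participating students, each participant's new assignment lies in the same class $S_k$ as her old assignment. Students not in the clique retain their assignments untouched. Therefore each single round of TADAM preserves, for every student, the perceived quality class of her assignment.

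**Main steps.** I would carry out the argument as a short induction on the round index $t$. The base case records that $M_0$ is exactly the SOSM outcome, so trivially every student's $M_0$-assignment shares its quality class with her DA/SOSM assignment (they are identical). For the inductive step, suppose that after round $t-1$ every student is matched to a school in the same class as her SOSM assignment. In round $t$ we either remove a student with no path through her (leaving her assignment fixed, so her class is unchanged) or we execute one trading clique. By the proposition, the schools traded in that clique all lie in a single class $S_k$; since the trade permutes these schools among the clique members and leaves all other students fixed, no student's assignment changes its class. Hence the invariant passes to $M_t$. As TADAM terminates in finitely many rounds (each round strictly decreases the preference reverence index, as noted after the definition of TADAM), the invariant holds for the final output, which is precisely the claim.

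**Anticipated obstacle.** Since the corollary is labeled a direct corollary, the genuine content has already been discharged in the proposition, and I expect no real difficulty; the only care needed is in handling the two branches of a TADAM round correctly (clique execution versus removal of a path-free student) and in making explicit that a trade is a permutation of the clique's schools, so that ``same set of schools, redistributed'' immediately gives ``same class for each participant.'' The one subtlety worth stating cleanly is that the invariant must be maintained class-by-class and student-by-student rather than merely globally, but this follows directly from the fact that each clique is monochromatic in quality class and that non-participants are untouched.
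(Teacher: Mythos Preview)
Your proposal is correct and takes essentially the same approach as the paper: the paper states this as a ``direct corollary'' of the preceding proposition without spelling out a proof, and your round-by-round induction is precisely the natural way to make that directness explicit. The observation that a trading clique merely permutes schools within a single perceived quality class among its members, while leaving all other students fixed, is exactly the mechanism the paper has in mind.
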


Now we finally prove that TADAM satisfies positive association. Let $i\in I$ be an arbitrary student. Let $P_i \in {\bf P}_i$, ${P}_{-i} \in {\bf X}_{-i}$, and $s, s^{\prime} \in S$ be such that TADAM assigns $i$ to $s$ when the input problem is $(P_i,P_{-i})$. Assume further that $i$ prefers $s^{\prime}$ to $s$ under the preference list $P_i$. We need to show that if $i$ reports preference $P_i^{s \leftrightarrow s^{\prime}}$ instead, TADAM will not change her assignment. 

First of all we note that if student $i$ is not involved in any trading clique in the original problem $(P_i,P_{-i})$, she is assigned to $s$ in the step 0 run of DA/SOSM, and her changed profile will not affect her DA/SOSM outcome (as DA/SOSM satisfies positive association). Since in this new profile, she assigns a higher rank to $s$, she will not be in any new trading cliques (the graph obtained at this stage will be a proper subgraph of the original). So let us assume that student $i$ is part of a trading clique at step $t$ of TADAM running for the problem $(P_i,P_{-i})$; without loss of generality we can assume that $t$ is the smallest such. Then this means that at this stage $i$ trades her DA/SOSM assignment, $s_{DA,SOSM}$, for the problem $(P_i,P_{-i})$. Then it is clear that $s \succeq_i s_{DA/SOSM}$ according to her true preference profile $P_i$. But since according to $P_i$, $s^{\prime}$ is even more preferable, we have $s^{\prime} \succeq_i s_{DA/SOSM}$ as well. So if she reports $P_i^{s \leftrightarrow s^{\prime}}$, the directed graph formed after the first run of DA/SOSM at step 0 of TADAM will have exactly the same edges as before. However since in the original problem she was assigned to $s$ by TADAM, she was never in any clique that resulted in her assignment being switched to anything better than $s$ (from the perspective of $P_i$). If she reports $P_i^{s \leftrightarrow s^{\prime}}$, and we look at the whole run of TADAM, we can therefore see that she will once again not be in any new cliques, in particular the schools that are now ranked below $s$ but above $s^{\prime}$ will still not be involved in any cliques. When all is said and done, she will once again be in the same cliques (all with outcomes resulting in $i$ being assigned to schools that are now ranked below $s^{\prime}$, until the clique which involves her getting assigned to $s$).$\qed$
\end{document}